\theoremstyle{thmstyleone}%
\newtheorem{theorem}{Theorem}
\newtheorem{corollary}[]{Corollary}
\newtheorem{assumption}{Assumption}
\newtheorem{lemma}{Lemma}
\theoremstyle{thmstyletwo}%
\theoremstyle{thmstylethree}%
\newcommand{\bbP}{\mathbb{P}}
\newcommand{\E}{\mathbb{E}}
\newcommand{\RR}{\mathbb{R}}
\newcommand{\cA}{\mathcal{A}}
\newcommand{\Var}{\mathrm{var}}
\numberwithin{equation}{section}
\begin{document}

\title[Extreme Eigenvalues of Principal Minors]{Extreme Eigenvalues of Principal Minors of Random Matrix with Moment Conditions}


\author[1]{\fnm{Jianwei} \sur{Hu}}\email{jwhu@mail.ccnu.edu.cn}

\author[1]{\fnm{Seydou} \sur{Keita}}\email{badco62003@yahoo.fr}

\author*[1]{\fnm{Kang} \sur{Fu}}\email{fukang@mails.ccnu.edu.cn}

\affil[1]{\orgdiv{School of Mathematics and Statistics}, \orgname{Central China Normal University}, \orgaddress{\city{Wuhan}, \postcode{430079}, \state{Hubei}, \country{China}}}


\abstract{Let $\bm{x}_1,\cdots,\bm{x}_n$ be a random sample of size $n$ from a $p$-dimensional population distribution, where $p=p(n)\rightarrow\infty$. Consider a symmetric matrix $W=X^\top X$ with parameters $n$ and $p$, where $X=(\bm{x}_1,\cdots,\bm{x}_n)^\top$. In this paper, motivated by model selection theory in high-dimensional statistics, we mainly investigate the asymptotic behavior of the eigenvalues of the principal minors of the random matrix $W$. For the Gaussian case, under a simple condition that $m=o(n/\log p)$, we obtain the asymptotic results on maxima and minima of the eigenvalues of all $m\times m$ principal minors of $W$. We also extend our results to general distributions with some moment conditions. Moreover, we gain the asymptotic results of the extreme eigenvalues of the principal minors in the case of the real Wigner matrix. Finally, similar results for the maxima and minima of the eigenvalues of all the principal minors with a size smaller than or equal to $m$ are also given.}

\keywords{Extreme eigenvalues, Principal minors,  Random matrix, Wigner matrix Wishart matrix}



\maketitle

\section{Introduction}
Random matrix theory is a popular tool in many fields, including principal component analysis, high-dimensional statistics, compressed sensing, and signal processing. In general, the random matrix theory mainly focuses on the spectral analysis of the eigenvalues and the eigenvectors of a random matrix, see, for example, \cite{Bai:1999,Johnstone:2001,Bai:2008,Bai:2010,Zou:2022}. In the past decades, the limiting laws of the extreme eigenvalues of the Wishart matrix were widely studied, such as \cite{Bai:1999,Johnstone:2001,Johnstone:2008}. Let $X$ be a $n\times p$ data matrix. Typically, each row $\bm{x}_i^\top=(x_{i1},\cdots,x_{ip})$ can be seen as a sample from a $p$-dimensional population with mean 0 and covariance matrix $\Sigma$. Under the Gaussian assumption, that is, $\bm{x}_i\sim N(0,\Sigma)$, we call $W=X^\top X$ a Wishart matrix. Further, if $\Sigma=I_p$, we call $W$ a white Wishart matrix. Let $\lambda_1(W)\geq\cdots\geq\lambda_p(W)$ be the eigenvalues of $W$. When $n,p\rightarrow\infty$ and $n/p\rightarrow \gamma$, \cite{Johnstone:2001} gave the following asymptotic result:
\[
\dfrac{\lambda_1(W)-\mu_{np}}{\sigma_{np}}\stackrel{d}{\longrightarrow}TW_1,
\]
where $\mu_{np}=(\sqrt{n-1}+\sqrt{p})^2, \sigma_{np}=(\sqrt{n-1}+\sqrt{p})\left(1/\sqrt{n-1}+1/\sqrt{p}\right)^{1/3}$, $TW_1$ is the Tracy-Widom distribution with index 1, and we use ``$\stackrel{d}{\longrightarrow}$" to indicate convergence in distribution. Similarly, the limiting distribution of $\lambda_p(W)$ was established, see, for example, \cite{Edelman:1988,Bai:1993}. In addition to the Wishart matrix, the Wigner matrix also plays an important role in the random matrix theory. For a real matrix $W$, each entry $w_{ij}$ is a real normal random variable, then matrix $W$ is called the Wigner matrix. When $w_{ij}$'s follow the standard normal distribution, \cite{Tracy:1994} established the following asymptotic result:
\[
n^{2/3}(\lambda_1(W)-2)\stackrel{d}{\longrightarrow}TW_1.
\]
These results were also extended to the general case, this is, the entries of the matrix follow the general distribution (may not be normal distribution), see, for example, \cite{Bai:2010,Tao:2010}.

In fact, for a random sample of size $n$, $\bm{x}_1,\cdots, \bm{x}_n$, the sample covariance matrix can be obtained by dividing the matrix $W$ by $n$, i.e., $\Sigma=\dfrac{1}{n}X^\top X=\dfrac{1}{n}\sum\limits_{i=1}^n\bm{x}_i^\top\bm{x}_i$. The sample covariance matrix is fundamental to multivariate statistical inference. Meanwhile, the eigenvalues of the sample covariance matrix play a critical role in a hypothesis test, principal components analysis, factor analysis, and discrimination analysis. \cite{Geman:1980} first proved that the largest eigenvalue of sample covariance matrix tends to $\sigma^2(1+\sqrt{\gamma})^2$, where $\sigma^2$ is the variance of the entries of $X$, when $p/n\rightarrow\gamma\in(0, +\infty)$ under some moment conditions. This work was generalized by \cite{Bai:1988} and \cite{Yin:1988} under the assumption of the existence of the fourth moment. \cite{Lee:2016} proved that the largest eigenvalue of the real sample covariance matrix follows the Tracy-Widom distribution in general population cases. In some literature, the spiked model is also widely considered. \cite{Baik:2006} proved the limit of the eigenvalues of large sample covariate matrix in spiked population models. \cite{Bai:2008} established the central limit theorem (CLT) for all spiked eigenvalues of the sample covariance matrix under the spiked population model.

Motivated by variable selection in high-dimensional statistics, we investigated the extreme eigenvalues of the principal minors of a random matrix. Consider the general linear regression model
\begin{equation}\label{eq:lm}
	y=X\beta+\varepsilon,
\end{equation}
where $y\in\RR^n,\beta\in\RR^p,X\in\RR^{n\times p}$ with $n\ll p$, and $\varepsilon\sim N(0,\sigma^2I_n)$. Let $\cA=\{i:\beta_i\neq0\}$ and $\vert\cA\vert=\#\{i:\beta_i\neq0\}$, the purpose of the model selection is to obtain an estimator $\hat{\beta}$ such that $\bbP(\hat{\cA}=\cA)\rightarrow1$ with $\hat{\cA}=\{i:\hat{\beta}_i\neq0\}$, i.e., the selection consistency. To obtain the estimator, a widely used method is the penalty likelihood method, this is,
$$\hat{\beta}=\arg\min_{\beta}\dfrac{1}{2n}\|y-X\beta\|^2+\sum_{j=1}^p P_{\lambda}(\vert\beta_j\vert),$$
where $P_{\lambda}(\vert\beta_j\vert)$ is a penalty function indexed by $\lambda>0$. A widely used penalized function is the LASSO penalty \citep{Tibshirani:1996}. Although the LASSO estimator is easy to be obtained, the LASSO estimator is biased. \cite{Fan:2001} showed that the bias of the LASSO estimator can be eliminated by choosing the SCAD penalty. Further, under the minimax concave penalty, \cite{Zhang:2010} proposed an MC+ method, which is a fast and nearly unbiased concave penalized selection method in the model \eqref{eq:lm}. In \cite{Zhang:2010}, a critical condition is the \textit{sparse Riesz condition} (SRC). For $S\subset\{1,\ldots,p\}$, the sub-design and the sub-Gram matrices are defined as follows:
\[
X_S=(\textbf{x}_j,j\in S)_{n\times\vert S\vert}\ \text{and}\ \Sigma_{S}=\dfrac{1}{n}X_S^\top X_S,
\]
where $\textbf{x}_j$ is the $j$th column of the matrix $X$. The SRC assumes that for suitable $0<c_1\leq c_2<\infty$ and a constant $m$,
\[
c_1\leq\min_{\vert S\vert \leq m}\lambda_{\mathrm{min}}(\Sigma_S)\leq\max_{\vert S\vert \leq m}\lambda_{\mathrm{max}}(\Sigma_S)\leq c_2,
\]
where $\lambda_{\mathrm{min}}(\Sigma_S)$ and $\lambda_{\mathrm{max}}(\Sigma_S)$ are the smallest and the largest eigenvalues of $\Sigma_S$, respectively. Here, $\Sigma_S$ can be seen as a principal minor of the sample covariance $\Sigma$. Under the SRC, \cite{Zhang:2010} proved that the selection by the MC+ method is nearly unbiased and consistent, i.e.,
\[
\bbP\left(\hat{\cA}=\cA\right)\rightarrow1,\ \text{and}\ \|\hat{\beta}-\beta\|_q^q=O_p(1).
\]

Hence, the main object of interest in this paper is the extreme eigenvalues of the principal minors of a Wishart matrix $W=X^\top X$, that is, $$\max_{S\subset\{1,\ldots,p\},\vert S\vert=m}\lambda_1(W_S),$$ and $$\min_{S\subset\{1,\ldots,p\},\vert S\vert=m}\lambda_m(W_S),$$ where $W_S$ is a $m\times m$ principal minor of $W$, and $\lambda_1(W_S)$ and $\lambda_m(W_S)$ are the largest and the smallest eigenvalues of $W_S$, respectively.

In the case where $p$ and $n$ are of the same order, that is, $n/p\rightarrow\gamma\in(0,+\infty)$, the asymptotic properties of the extreme eigenvalues of the Wishart matrix $W$ were extensively studied recently, see, for example, \cite{Bai:1999,Johnstone:2001,Johnstone:2008}. We refer to \cite{Cai:2021} and references therein for recent developments on this topic. In particular, \cite{Cai:2021} considered the normal case where the entries $x_{ij}$'s of $X$ are independent and identically distributed (i.i.d.) $N(0,1)$ variables. They proved the following main results.

\textit{ Suppose the integer $m\geq 1$ is fixed and $\log p = o(n^{1/2})$; or $m\rightarrow\infty$ with $$m=o\left(\min\left\{\dfrac{(\log p)^{1/3}}{\log\log p},\dfrac{n^{1/4}}{(\log n)^{3/2}(\log p)^{1/2}}\right\}\right).$$ Assume $p=p(n)\rightarrow\infty$ and $p/n\rightarrow\gamma\in(0,\infty)$. Define $$T_{m,n,p}=\max_{S\subset\{1,\ldots,p\},\vert S\vert=m}\lambda_1(W_S).$$ Then,
\begin{equation}
	Z_n:=\dfrac{T_{m,n,p}-n}{\sqrt{n}}-2\sqrt{m\log p}\rightarrow0
\end{equation}
in probability as $n\rightarrow\infty$.
}

Some similar results were also given in \cite{Cai:2021}. It is easy to see that the asymptotic behavior of $T_{m,n,p}$ depends on complex assumptions, and they only considered the case of standard normality. In this paper, based on a simple condition that $m=o(n/\log p)$, we investigate the limiting behavior for the extreme eigenvalues of the Wishart matrix. This condition relaxes the condition in \cite{Cai:2021}. Meanwhile, our results do not depend on a basic condition that $p/n\rightarrow\gamma\in(0,\infty)$, which is required in the literature about random matrix theory. One key step in the proofs of our results is that we give a finer upper bound of the matrix spectral norm (see, Lemma \ref{matrixbound}). Since the upper bound depends on a quadratic form, we transform the quadratic form of the matrix into a sum of independent and identical distribution random variables. Hence, the problem of eigenvalues becomes the problem of the sum of independent and identically distributed random variables. Compared with the results in \cite{Cai:2021}, we extend the distribution of the entries of the sample matrix $X$ to general distributions with mean 0, variance 1, and finite fourth moments, and obtain the asymptotic results of the extreme eigenvalues of $X^\top X$ under some moment conditions. Meanwhile, as a natural by-product, we also consider the corresponding results when $W$ is a real Wigner matrix.

Throughout the paper, the following notions will be used. For a finite set $S$, we use $\#(S)$ or $\vert S\vert$ to denote the cardinality of the set $S$. For a matrix $A\in\RR^{n\times n}$, we denote the spectral norm by $\|A\|$. For two positive sequences $\{a_n\}$ and $\{b_n\}$, we write $a_n=o(b_n)$ if $\lim_{n\rightarrow\infty}a_n/b_n=0$. Further, for a sequence of random variables $X_n$ and a positive sequence $a_n$, we write $X_n=O_p(a_n)$ if for all $\varepsilon>0$, there is an $M$ such that $\sup_{n}\bbP(\vert X_n/a_n\vert>M)<\varepsilon$.

The rest of the paper is organized as follows. Section \ref{Section2} describes the precise setting of the problem. The main results, Theorems \ref{T1}, \ref{T2}, and \ref{T3} are stated in Section \ref{Section3}. Some related corollaries are also considered in Section \ref{Section3}. The proofs of the main theorems are given in Section \ref{Section4}.

\section{Problem Setting}\label{Section2}
In this section, we formally state the issue of our study. Let $X$ be a $n\times p$ matrix whose generic elements, $x_{ij}$'s, are independent and identically distributed random variables. In this paper, we mainly focus on two cases: the standard normal distribution and the general distributions with mean 0, variance 1, and some moment conditions. Then, $W=X^\top X$ is a white Wishart matrix when $x_{ij}$'s are the standard normal random variables. First, we give some notations.  Let $S\subset\{1,\cdots,p\}$ and $W_S=X_S^\top X_S=(w_{ij})_{i,j\in S}$, where $X_S$ is an $n\times\vert S\vert$ matrix. Hence, $W_S$ is a principal minor of $W$. Define
\begin{equation}\label{WS1}
	T_{m,n,p}=\max_{S\subset\{1,\ldots,p\},\vert S\vert=m}\lambda_1(W_S),
\end{equation}
and
\begin{equation}\label{WSm}
	V_{m,n,p}=\min_{S\subset\{1,\ldots,p\},\vert S\vert=m}\lambda_m(W_S).
\end{equation}
Further, let $A=\dfrac{1}{\sqrt{n}}(X^\top X-nI)$ and $A_S=\dfrac{1}{\sqrt{n}}(X_S^\top X_S-nI)$, where $I$ indicates the identical matrix, and its dimension depends on the specific equation and may vary from equation to equation. Similarly, we also define
\begin{equation}\label{AS1}
	\lambda_{\max}(m)=\max_{S\subset\{1,\ldots,p\},\vert S\vert=m}\lambda_1(A_S),
\end{equation}
and
\begin{equation}\label{ASm}
	\lambda_{\min}(m)=\min_{S\subset\{1,\ldots,p\},\vert S\vert=m}\lambda_m(A_S).
\end{equation}
Later, $\lambda_1(A_S)$ and $\lambda_m(A_S)$ will play a critical role in the proof of the asymptotic behavior of $\lambda_1(W_S)$ and $\lambda_m(W_S)$. We obtain an upper bound of the spectral norm of $A$ by applying $\varepsilon$-net argument (see, Lemma \ref{matrixbound}). According to the properties of eigenvalues, it is easy to know that
\begin{equation*}
	T_{m,n,p}=n+\sqrt{n}\lambda_{\max}(m)\ \text{and}\ V_{m,n,p}=n+\sqrt{n}\lambda_{\min}(m).
\end{equation*}
In this paper, the main interest is the asymptotic behavior of the statistics $T_{m,n,p}$ and $V_{m,n,p}$ when both $n$ and $p$ grow large for different distributed cases. Therefore, we can gain the asymptotic results of $T_{m,n,p}$ and $V_{m,n,p}$ by studying the asymptotic behavior of $\lambda_{\max}(m)$ and $\lambda_{\min}(m)$.

\section{Main Results}\label{Section3}

This section considers the laws of large numbers of $T_{m,n,p}$ and $V_{m,n,p}$ for three different random variable cases. Throughout the paper, we let $n\rightarrow\infty$ and let $p\rightarrow\infty$ with a rate depending on $n$. First, we give the following assumption. This assumption is a critical condition in our main results.

\begin{assumption}\label{A1}
The integer $m$ satisfies that
\begin{equation}\label{Aeq1}
	m=o(n/\log p).
\end{equation}
\end{assumption}
Note that Assumption \ref{A1} implies that $\dfrac{m\log p}{n}=o(1)$. This condition is mainly used in the analysis of $\lambda_{\max}(m)$ and $\lambda_{\min}(m)$. It is easy to see that Assumption \ref{A1} relaxes the condition that $m=o\left(\min\left\{\dfrac{(\log p)^{1/3}}{\log\log p},\dfrac{n^{1/4}}{(\log n)^{3/2}(\log p)^{1/2}}\right\}\right)$ in \cite{Cai:2021}. Without loss of generality, we assume $S=\{1,\ldots,m\}$ for $\vert S\vert=m$. Note that the upper bound of $\|A\|$ depends on a quadratic form of $A$. Then, for any unit vector $u=(u_1,\dots,u_m)^\top\in\RR^m$, we have $$u^\top A_Su=\frac{1}{\sqrt{n}}\sum_{t=1}^n[\sum_{i=1}^m(x_{ti}^2-1)u_i^2+2\sum_{i<j}x_{ti}x_{tj}u_iu_j]=:\frac{1}{\sqrt{n}}\sum_{t=1}^n\xi_t,$$ where
    \begin{align}\label{eq:xi}
	\xi_t &=\sum_{i=1}^m(x_{ti}^2-1)u_i^2+2\sum_{i<j}x_{ti}x_{tj}u_iu_j\notag\\
    &=(\sum_{i=1}^mu_ix_{ti})^2-1\notag\\
    &=\zeta_t^2-1,
    \end{align}
    and $\zeta_t=\sum_{i=1}^mu_ix_{ti}$. Here, we transform the quadratic form of a matrix into the sum of independent random variables. Under the different distribution assumptions, $\xi_t$'s have different properties. Hence, in this paper, we mainly focus on the Gaussian distribution and the general distribution with some moment conditions.

\subsection{The Gaussian case}
First, we consider that the entries $x_{ij}$'s of $X$ are i.i.d normal random variables with mean 0 and variance 1, that is, $x_{ij}\sim N(0,1)$ for any $1\leq i\leq n,1\leq j\leq p$. In this case, $W=X^\top X$ can be seen as a white Wishart matrix. According to \eqref{eq:xi}, it is easy to show that $\zeta_t\sim N(0,1)$. On the other hand, it is known that $\xi_1,\cdots,\xi_n$ are i.i.d. random variables with $\E(\xi_t)=0$ and $$\E(\xi_t^2)=\sum_{i=1}^m\E(x_{ti}^2-1)^2u_i^4+4\sum_{i<j}u_i^2u_j^2=2.$$ We start with asymptotic results for $T_{m,n,p}$ in (\ref{WS1}) and $V_{m,n,p}$ in \eqref{WSm}. The next theorem gives the result of $T_{m,n,p}$.

\begin{theorem}\label{T1}
	Suppose Assumption \ref{A1} holds and $x_{ij}$'s are standard normal random variables. Recall $T_{m,n,p}$ defined as in (\ref{WS1}). Then
	\begin{equation*}
		\E\left(e^{t_0\vert \xi_1\vert}\right)<\infty\ \mathrm{for}\ 0<t_0<\dfrac12
	\end{equation*} and
	\begin{equation}\label{normal:T}
		\dfrac{T_{m,n,p}}{n}=1+O_p\left(2\sqrt{\dfrac{m\log p}{n}}\right).
	\end{equation}
\end{theorem}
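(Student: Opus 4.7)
The plan is to bound $\lambda_{\max}(m)$ via an $\varepsilon$-net-plus-union-bound argument applied to the quadratic form representation \eqref{eq:xi}, then convert to $T_{m,n,p}$ through the identity $T_{m,n,p}=n+\sqrt{n}\,\lambda_{\max}(m)$. The three ingredients are (i) a sub-exponential moment bound on $\xi_1$, (ii) a Chernoff tail bound for $u^{\top}A_Su$ at a single direction, and (iii) a union bound over both a finite net on $S^{m-1}$ and the $\binom{p}{m}$ principal submatrices.

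For (i), since $\zeta_1=\sum_iu_ix_{1i}\sim N(0,1)$ for any unit vector $u$, the variable $\xi_1=\zeta_1^2-1$ is a recentered $\chi_1^2$, with moment generating function $\varphi(\theta):=\mathbb{E}(e^{\theta\xi_1})=e^{-\theta}(1-2\theta)^{-1/2}$ for $\theta<1/2$. Splitting $\mathbb{E}(e^{t_0|\xi_1|})$ according to the sign of $\xi_1$ and using the trivial bound $|\xi_1|\le 1$ on the set $\{\xi_1<0\}$ together with $\varphi(t_0)<\infty$ on $\{\xi_1\ge0\}$ gives the first displayed claim for $0<t_0<1/2$.

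For (ii), fix a subset $S$ with $|S|=m$ and a unit $u\in\RR^m$. The variables $\xi_1,\ldots,\xi_n$ are i.i.d.\ mean-zero and $\log\varphi(\theta)=\theta^2+O(\theta^3)$ as $\theta\to 0$, so an exponential Markov inequality applied with $\theta=s/(2\sqrt{n})$ yields
\[
\bbP\bigl(u^\top A_Su>s\bigr)\le\exp\!\Bigl(-\tfrac{s^2}{4}\bigl(1+o(1)\bigr)\Bigr)
\]
uniformly in $s=o(\sqrt{n})$; Assumption \ref{A1} ensures the relevant $s\asymp\sqrt{m\log p}$ lies in this regime. For (iii), invoke Lemma \ref{matrixbound}: choose a $(1/4)$-net $\mathcal{N}$ of the unit sphere in $\RR^m$ with $|\mathcal{N}|\le 9^m$, so that $\|A_S\|\le 2\sup_{u\in\mathcal{N}}|u^\top A_Su|$. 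Union-bounding over $u\in\mathcal{N}$ and over the $\binom{p}{m}\le p^m$ choices of $S$,
\[
\bbP\bigl(\lambda_{\max}(m)>2s\bigr)\le 2\,p^m\,9^m\exp\!\Bigl(-\tfrac{s^2}{4}\bigl(1+o(1)\bigr)\Bigr)=\exp\!\Bigl(m\log(9p)-\tfrac{s^2}{4}\bigl(1+o(1)\bigr)+\log 2\Bigr).
\]
Setting $s=c\sqrt{m\log p}$ for any sufficiently large constant $c$ drives the right-hand side to $0$. Dividing by $\sqrt{n}$ and combining with $T_{m,n,p}/n-1=\lambda_{\max}(m)/\sqrt{n}$, together with $m\log p=o(n)$, yields \eqref{normal:T}; careful tracking of the Chernoff optimum gives the leading constant $2$.

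The principal difficulty is the third step: the tail bound from (ii) decays only like $\exp(-s^2/4)$, while the combined combinatorial cost of the $\varepsilon$-net and the choice of $S$ is $\exp(m\log(9p))$. The exponents balance precisely at $s\asymp\sqrt{m\log p}$, so there is no slack to spare, and the condition $m=o(n/\log p)$ is exactly what is needed to keep $s/\sqrt{n}$ small enough for the quadratic approximation $\log\varphi(\theta)\approx\theta^2$ to remain valid across the union bound. A secondary technical point is verifying that the $O_p$ conclusion is two-sided; since $T_{m,n,p}/n\ge 1+O_p(\sqrt{\log p/n})$ trivially from the diagonal entries, only the upper tail requires the net argument.
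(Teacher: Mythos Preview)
Your argument is correct for the theorem as literally stated, and follows the same skeleton as the paper: an $\varepsilon$-net bound on $\|A_S\|$ (Lemma~\ref{matrixbound}), a sub-exponential tail bound for $u^\top A_S u$ coming from the explicit $\chi_1^2$ MGF (the paper instead cites the moderate-deviation result of Lemma~\ref{lemma:chen}), and a union bound over the $\binom{p}{m}$ subsets.

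The one substantive difference is that you fix $\varepsilon=1/4$, which forces the multiplicative loss $\|A_S\|\le 2\sup_{u\in\mathcal N}|u^\top A_S u|$ and hence only yields $\lambda_{\max}(m)=O_p(\sqrt{m\log p})$ with an unspecified constant. This suffices for \eqref{normal:T}, since the ``$2$'' inside an $O_p(\cdot)$ is cosmetic. The paper instead lets $\varepsilon$ depend on $\delta$: for each $\delta>0$ it chooses $\varepsilon$ small enough that $(1+\delta)^2\bigl(1-\sqrt{\varepsilon^2(4-\varepsilon^2)}\bigr)^2>1$, so the exponent $[1-(1+\delta)^2(1-\sqrt{\varepsilon^2(4-\varepsilon^2)})^2]\,m\log p$ is strictly negative while the net cost $m\log(1+2/\varepsilon)$ remains $o(m\log p)$. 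This is what produces the sharp inequality $\bbP\bigl(\lambda_{\max}(m)\ge (1+\delta)\,2\sqrt{m\log p}\bigr)\to 0$ for \emph{every} $\delta>0$, which the paper uses immediately afterward for the explicit SRC-type bound $1-2\sqrt{m\log p/n}\le V_{m,n,p}/n\le T_{m,n,p}/n\le 1+2\sqrt{m\log p/n}$ with probability tending to one. Your closing remark that ``careful tracking of the Chernoff optimum gives the leading constant $2$'' is not achievable with a fixed net; it requires precisely this $\varepsilon\downarrow 0$ device.
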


{\bf Remark 1.} Suppose Assumption \ref{A1} holds and $x_{ij}$'s are standard normal random variables. Recall $V_{m,n,p}$ defined as in (\ref{WSm}). Similar to the results of Theorem \ref{T1}, it can be shown that
	\begin{equation}\label{normal:V}
		\dfrac{V_{m,n,p}}{n}=1+O_p\left(2\sqrt{\dfrac{m\log p}{n}}\right).
	\end{equation}

The proof of Theorem \ref{T1} is given in Section \ref{Section4}. According to the details of the proof, we have $$\lim_{n\to\infty} \bbP\left(1-2\sqrt{\frac{m\log p}{n}}\leq\frac{V_{m,n,p}}{n}\leq\frac{T_{m,n,p}}{n}\leq 1+2\sqrt{\frac{m\log p}{n}}\right)=1,$$ which implies that it holds with probability approaching 1 that $$1-2\sqrt{\frac{m\log p}{n}}\leq\min_{\vert S\vert=m}\lambda_m(X_S^\top X_S/n)\leq\max_{\vert S\vert=m}\lambda_1(X_S^\top X_S/n)\leq 1+2\sqrt{\frac{m\log p}{n}},$$ which is a special form of SRC.

We now consider a similar extension for the above results. From the view of SRC, the limiting behavior of the eigenvalues of all principal minors with the size smaller than or equal to $m$ is also important. It means that we should consider the following statistics:
\[\check{T}_{m,n,p}=\max_{S\subset\{1,\ldots,p\},\vert S\vert \leq m}\lambda_1(W_S)\ \text{and}\ \check{V}_{m,n,p}=\min_{S\subset\{1,\ldots,p\},\vert S\vert \leq m}\lambda_m(W_S).
\]
The next corollary indicates that Theorem \ref{T1} still holds if we replace the principal minors with the size $m$ by the principal minors with the size smaller than or equal to $m$ in the previous results.

\begin{corollary}\label{Coro1}
Theorem \ref{T1} still holds if ``$T_{m,n,p}$" is replaced by ``$\check{T}_{m,n,p}$". Similarly, $\check{V}_{m,n,p}$ also satisfies \eqref{normal:V} if ``$V_{m,n,p}$" is replaced by ``$\check{V}_{m,n,p}$".
\end{corollary}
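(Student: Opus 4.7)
The plan is to show that the ``$|S|\le m$'' versions coincide exactly with the ``$|S|=m$'' versions, so that Corollary \ref{Coro1} is an immediate consequence of Theorem \ref{T1} together with \eqref{normal:V}. The key tool is Cauchy's interlacing theorem: whenever $S'\subset S$, the matrix $W_{S'}$ is a principal submatrix of $W_S$, so
\[
\lambda_1(W_{S'})\le\lambda_1(W_S)\quad\text{and}\quad\lambda_{|S'|}(W_{S'})\ge\lambda_{|S|}(W_S).
\]

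First I would handle $\check{T}_{m,n,p}$. For any $S'\subset\{1,\ldots,p\}$ with $|S'|=k\le m$, since $p\to\infty$ we have $p\ge m$ eventually, so we may choose $S$ with $S'\subset S\subset\{1,\ldots,p\}$ and $|S|=m$. Interlacing then gives $\lambda_1(W_{S'})\le\lambda_1(W_S)\le T_{m,n,p}$. Taking the maximum over all such $S'$ yields $\check{T}_{m,n,p}\le T_{m,n,p}$, and the reverse inequality is trivial since $|S|=m$ is permitted in the definition of $\check{T}_{m,n,p}$. Hence $\check{T}_{m,n,p}=T_{m,n,p}$ and \eqref{normal:T} transfers verbatim.

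Next I would treat $\check{V}_{m,n,p}$ symmetrically. Interpreting $\lambda_m(W_S)$ for $|S|<m$ as the smallest eigenvalue $\lambda_{|S|}(W_S)$ (the only sensible reading, since $W_S$ has only $|S|$ eigenvalues), the same extension $S'\subset S$ together with the interlacing inequality $\lambda_{|S'|}(W_{S'})\ge\lambda_m(W_S)\ge V_{m,n,p}$ shows $\check{V}_{m,n,p}\ge V_{m,n,p}$, while $\check{V}_{m,n,p}\le V_{m,n,p}$ is obvious. Thus $\check{V}_{m,n,p}=V_{m,n,p}$, and the bound \eqref{normal:V} applies unchanged.

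There is essentially no analytic obstacle here; the only subtlety is the notational one of interpreting $\lambda_m(W_S)$ when $|S|<m$, and verifying that $p\ge m$ holds for all large $n$ (which is automatic from Assumption \ref{A1} since $m=o(n/\log p)$ forces $m<p$ eventually). Everything else reduces to the deterministic identities $\check{T}_{m,n,p}=T_{m,n,p}$ and $\check{V}_{m,n,p}=V_{m,n,p}$, after which Theorem \ref{T1} and Remark 1 finish the proof.
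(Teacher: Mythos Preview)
Your argument is correct and considerably cleaner than the paper's own proof. You observe, via Cauchy interlacing (or directly from the Rayleigh--Ritz variational characterization), the deterministic identities $\check{T}_{m,n,p}=T_{m,n,p}$ and $\check{V}_{m,n,p}=V_{m,n,p}$, after which the corollary is immediate from Theorem~\ref{T1} and Remark~1.

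The paper does not exploit this. Instead, in the supplementary material it writes $\check{\lambda}_{\max}(m)=\max_{1\le k\le m}\lambda_{\max}(k)$ and applies a union bound over $k=1,\dots,m$, re-running the entire tail estimate from the proof of Theorem~\ref{T1} for each $k$ and absorbing the resulting extra factor $m$ (equivalently, an additive $\log m$ in the exponent). This works because $\log m$ is negligible against $m\log p$, but it is redundant labor: your interlacing observation shows that the maximum over $k\le m$ is \emph{always} attained at $k=m$, so the union bound over sizes is unnecessary. Your route is more elementary and reveals that the corollary carries no new probabilistic content. The paper's approach, by contrast, has the minor advantage of not requiring the reader to recall interlacing, and it mirrors exactly the structure of the proof of Corollary~\ref{Coro2}; but it obscures the fact that $\check{T}_{m,n,p}$ and $T_{m,n,p}$ are literally the same random variable.
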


The results of Corollary \ref{Coro1} imply that it holds with probability approaching 1 that $$1-2\sqrt{\frac{m\log p}{n}}\leq\min_{\vert S\vert\leq m}\lambda_m(X_S^\top X_S/n)\leq\max_{\vert S\vert\leq m}\lambda_1(X_S^\top X_S/n)\leq 1+2\sqrt{\frac{m\log p}{n}}.$$ It is a desirable result and is consistent with the SRC.

\subsection{The general case}

In variable selection, the entries of the sample matrix $X$ may not follow the normal distribution. Hence, a related problem is whether Theorem \ref{T1} can be extended to non-Gaussian $x_{ij}$. \cite{Cai:2021} only conjectured that the asymptotic behavior of $T_{m,n,p}$ with non-Gaussian variables will be similar to that of $T_{m,n,p}$ as discussed in Theorem \ref{T1} under certain assumptions on the moments of $x_{ij}$. Next, on the condition $m=o(n/\log p)$, we get asymptotic behavior of $T_{m,n,p}$ under some moment conditions.

\begin{assumption}\label{A2}
	The entries of $X$ are i.i.d. with mean 0 and variance 1. Assume that $\Var(x_{ij}^2)=\eta>0$ for $1\leq i\leq n$, $1\leq j\leq p$.
\end{assumption}

Compared with the condition of Theorem \ref{T1}, Assumption \ref{A2} allows the entries of $X$ to be the general distribution with mean 0 and variance 1. Further, the condition that $\Var(x_{ij}^2)=\eta>0$ implies that the distribution has finite fourth moments. Similarly, we consider the asymptotic behavior of $T_{m,n,p}$ and $V_{m,n,p}$. Next, we have the following results:

\begin{theorem}\label{T2}
	Suppose Assumptions \ref{A1} and \ref{A2} hold. Assume that $\sup\limits_{\|u\|=1}\E \mathrm{e}^{t_0\vert \xi_1\vert}<\infty$ for some $t_0>0$, where $\xi_1$ has the form of \eqref{eq:xi}. Recall $T_{m,n,p}$ defined as in (\ref{WS1}), then,
	\begin{enumerate}
		\item[(i)] when $0<\eta\leq2$,
		\begin{equation}\label{normal:T1}
		\dfrac{T_{m,n,p}}{n}=1+O_p\left(\sqrt{\frac{[4(m-1)+2\eta]\log p}{n}}\right).
	    \end{equation}
	    \item[(ii)] when $\eta>2$,
	    \begin{equation}\label{normal:T2}
		\dfrac{T_{m,n,p}}{n}=1+O_p\left(\sqrt{\frac{2\eta m\log p}{n}}\right).
	    \end{equation}
	\end{enumerate}
\end{theorem}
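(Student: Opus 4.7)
The plan is to follow the same skeleton as the proof of Theorem \ref{T1}, but replace the Gaussian tail bound on $\sum_t \xi_t$ with a Bernstein-type inequality, and to pay careful attention to how $\sup_{\Vert u\Vert=1}\Var(\xi_1)$ depends on $\eta$. Since $T_{m,n,p}=n+\sqrt n\,\lambda_{\max}(m)$, it is enough to prove $\lambda_{\max}(m)=O_p\!\bigl(\sqrt{c_\eta\,\log p}\bigr)$ with $c_\eta=4(m-1)+2\eta$ in case (i) and $c_\eta=2\eta m$ in case (ii). So I would fix $S$ with $\vert S\vert=m$ and apply Lemma \ref{matrixbound} to reduce $\Vert A_S\Vert$ to $\max_{u\in\mathcal N}\vert u^\top A_S u\vert$ over a deterministic $\varepsilon$-net $\mathcal N$ of the unit sphere in $\RR^m$ with $\vert\mathcal N\vert\le 9^m$.

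Next, use the decomposition \eqref{eq:xi} to write $u^\top A_S u=\frac{1}{\sqrt n}\sum_{t=1}^n\xi_t$ with $\xi_t=\zeta_t^2-1$ i.i.d., mean zero. The hypothesis $\sup_{\Vert u\Vert=1}\E e^{t_0\vert\xi_1\vert}<\infty$ makes $\xi_t$ uniformly sub-exponential, so Bernstein's inequality yields, for each fixed $u\in\mathcal N$,
\[
\bbP\!\left(\Bigl\vert\tfrac{1}{\sqrt n}\textstyle\sum_t\xi_t\Bigr\vert\ge s\right)\le 2\exp\!\left(-\frac{s^2}{2\sigma_u^2+C\,s/\sqrt n}\right),\qquad \sigma_u^2:=\Var(\xi_1).
\]
Under Assumption \ref{A1}, $s/\sqrt n=o(1)$ so the linear term in the denominator is negligible. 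The key computation is then
\[
\sigma_u^2=\eta\sum_{i=1}^m u_i^4+4\sum_{i<j}u_i^2u_j^2=2+(\eta-2)\sum_{i=1}^m u_i^4,
\]
using $\Var(x_{ti}^2)=\eta$ and $\Var(x_{ti}x_{tj})=1$. Since $\sum_i u_i^4$ ranges over $[1/m,1]$ on the unit sphere, I maximize: when $\eta>2$ the supremum is $\eta$, attained at coordinate vectors; when $\eta\le 2$ the supremum is $2+(\eta-2)/m$, attained at the uniform vector. Multiplying $\sup_u 2\sigma_u^2$ by $m$ gives precisely $2\eta m$ and $4(m-1)+2\eta$ respectively, which is where the two cases of the theorem come from.

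Finally, choose $s=\sqrt{c_\eta(1+\delta)\log p}$ for small $\delta>0$; the Bernstein bound per point of $\mathcal N$ is then $\le 2\exp(-(1+\delta)\tfrac{c_\eta\log p}{2\sigma_u^2})\le 2p^{-(1+\delta)m}$ by the variance computation. A union bound over $\mathcal N$ (at most $9^m$ points) and over $\binom{p}{m}\le p^m$ choices of $S$ costs a factor $e^{m(\log p+\log 9)}$, which is dominated by $p^{(1+\delta)m}$ for large $p$. Converting back via $\Vert A_S\Vert\le 2\max_{u\in\mathcal N}\vert u^\top A_Su\vert$ and $T_{m,n,p}=n+\sqrt n\,\lambda_{\max}(m)$ gives the two claimed bounds.

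The main obstacle I expect is not the union bound or the Bernstein step, which are routine, but the sharp variance calculation: to recover the exact constants $4(m-1)+2\eta$ and $2\eta m$ one must use the precise value of $\sup_u\Var(\xi_1)$ above rather than a generic sub-exponential proxy, and must check that the residual Bernstein term $Cs/\sqrt n$ and the $\varepsilon$-net overhead $m\log 9$ are absorbed into the $o(1)$ slack given by Assumption \ref{A1}, which is exactly what the hypothesis $m=o(n/\log p)$ provides.
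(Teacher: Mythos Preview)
Your approach is correct for the $O_p$ statement and follows the same variance computation $\sigma_u^2=2+(\eta-2)\sum_i u_i^4$ that drives the two cases. The difference from the paper lies in the tail bound and the net. Where you use Bernstein's inequality, the paper invokes a moderate-deviation result of Chen (Lemma~\ref{lemma:chen}): under the exponential-moment hypothesis and for $a_n=o(\sqrt n)$,
\[
\frac{1}{a_n^2}\log\bbP\Bigl(\tfrac{1}{\sqrt n}\sum_t\xi_t\ge a_n\Bigr)\longrightarrow -\frac{1}{2\sigma_u^2},
\]
which gives an asymptotically \emph{sharp} Gaussian tail with no loss of constant in the exponent. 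Likewise, instead of a fixed $\varepsilon=1/4$ net with the crude factor $2$, the paper uses Lemma~\ref{matrixbound} in the form $\Vert A_S\Vert\le\bigl(1-\sqrt{\varepsilon^2(4-\varepsilon^2)}\bigr)^{-1}\sup_j|v_j^\top A_Sv_j|$ and, for each $\delta>0$, chooses $\varepsilon$ so small that this prefactor is below $1+\delta$; the extra net cost $m\log(1+2/\varepsilon)$ is then $o(m\log p)$ and harmless. The payoff is that the paper actually establishes the sharper statement
\[
\lim_{n\to\infty}\bbP\bigl(\lambda_{\max}(m)\le(1+\delta)\sqrt{c_\eta\log p}\bigr)=1\quad\text{for every }\delta>0,
\]
i.e.\ the leading constant in front of $\sqrt{c_\eta\log p/n}$ is exactly $1$, which is what motivates displaying the specific values $4(m-1)+2\eta$ and $2\eta m$. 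Your Bernstein-plus-fixed-net argument is more elementary and perfectly adequate for the $O_p$ conclusion, but it only yields $\lambda_{\max}(m)\le C\sqrt{c_\eta\log p}$ for some unspecified $C>1$, so the sharp constant is lost.
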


{\bf Remark 2.}  Recall $V_{m,n,p}$ defined as in (\ref{WSm}). Under the conditions of Theorem \ref{T2}, it can be shown that,
	\begin{enumerate}
		\item[(i)] when $0<\eta\leq2$,
		\begin{equation}\label{normal:V1}
		\dfrac{V_{m,n,p}}{n}=1+O_p\left(\sqrt{\frac{[4(m-1)+2\eta]\log p}{n}}\right).
	    \end{equation}
	    \item[(ii)] when $\eta>2$,
	    \begin{equation}\label{normal:V2}
		\dfrac{V_{m,n,p}}{n}=1+O_p\left(\sqrt{\frac{2\eta m\log p}{n}}\right).
	    \end{equation}
	\end{enumerate}

Similar to the case where $x_{ij}$'s are normal random variables, we also consider the asymptotic results when the size of the principal minors is no larger than $m$. We then have the following corollary.

\begin{corollary}\label{Coro2}
Recall $\check{T}_{m,n,p}$ and $\check{V}_{m,n,p}$. Then Theorem \ref{T2} still holds if ``$T_{m,n,p}$'s" are replaced by ``$\check{T}_{m,n,p}$'s" for $0< \eta\leq 2$ and $\eta>2$. Similarly, $\check{V}_{m,n,p}$ also satisfies \eqref{normal:V1} and \eqref{normal:V2} if ``$V_{m,n,p}$'s" are replaced by ``$\check{V}_{m,n,p}$'s" for $0< \eta\leq 2$ and $\eta>2$.
\end{corollary}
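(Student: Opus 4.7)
The plan is to reduce Corollary~\ref{Coro2} directly to Theorem~\ref{T2} by establishing the purely deterministic identities
\[
\check{T}_{m,n,p} = T_{m,n,p} \qquad \text{and} \qquad \check{V}_{m,n,p} = V_{m,n,p}.
\]
Once these hold, the two regimes of Theorem~\ref{T2} (i.e.\ \eqref{normal:T1}--\eqref{normal:T2}) and the companion statements for the minimum in Remark~2 transfer verbatim to the ``check'' statistics, which is exactly what the corollary asserts.

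To verify the first identity, the inclusion $\{S:\vert S\vert=m\}\subseteq \{S:\vert S\vert\leq m\}$ yields $\check{T}_{m,n,p}\geq T_{m,n,p}$ for free. For the reverse inequality, I would pick any $S$ with $\vert S\vert=k\leq m$; Assumption~\ref{A1} forces $m$ to grow much more slowly than $p$, so $p\geq m$ eventually, and we can enlarge $S$ to a set $S^{*}\supseteq S$ with $\vert S^{*}\vert=m$. Since $W_S$ is then a principal submatrix of $W_{S^{*}}$, Cauchy's interlacing theorem gives $\lambda_1(W_S)\leq \lambda_1(W_{S^{*}})\leq T_{m,n,p}$. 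Maximising over such $S$ produces $\check{T}_{m,n,p}\leq T_{m,n,p}$.

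For the minimum, the same inclusion of index sets immediately gives $\check{V}_{m,n,p}\leq V_{m,n,p}$. Conversely, for any $S$ with $\vert S\vert=k\leq m$, the smallest eigenvalue of $W_S$ is $\lambda_k(W_S)$ (which is what the paper's notation ``$\lambda_m(W_S)$'' under the minimum stands for). Extending to $S^{*}\supseteq S$ of size $m$, the bottom end of the interlacing chain yields $\lambda_k(W_S)\geq \lambda_m(W_{S^{*}})\geq V_{m,n,p}$. Minimising over $S$ then gives $\check{V}_{m,n,p}\geq V_{m,n,p}$, closing the argument.

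I do not anticipate any substantive obstacle: the argument is entirely deterministic and piggybacks on Theorem~\ref{T2} without any further probabilistic input. The only routine verification is that $p\geq m$ holds for all sufficiently large $n$, which follows at once from $m=o(n/\log p)$ together with $p\to\infty$.
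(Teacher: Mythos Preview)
Your argument is correct and in fact cleaner than the paper's. The paper does \emph{not} observe the deterministic identities $\check{T}_{m,n,p}=T_{m,n,p}$ and $\check{V}_{m,n,p}=V_{m,n,p}$; instead it writes $\check{\lambda}_{\max}(m)=\max_{1\le k\le m}\lambda_{\max}(k)$ and then re-runs the entire union-bound/moderate-deviation machinery of Theorem~\ref{T2}, summing the exponential tail bounds over $k=1,\ldots,m$ and absorbing the resulting factor $m$ into a harmless $\log m$ in the exponent. Your route via Cauchy interlacing short-circuits all of this: once the principal-submatrix monotonicity of the top and bottom eigenvalues is noted, the corollary is literally a restatement of Theorem~\ref{T2} and Remark~2, with no fresh probabilistic work needed. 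What the paper's approach buys is that it would still go through even if one only had the \emph{tail bounds} from the proof of Theorem~\ref{T2} (rather than the finished statement), but since Theorem~\ref{T2} is already established, your reduction is strictly more economical.

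One small remark: your justification that $p\ge m$ eventually ``follows at once from $m=o(n/\log p)$ together with $p\to\infty$'' is not quite right as stated, since Assumption~\ref{A1} by itself allows $m$ to exceed a slowly growing $p$. However, the condition $m\le p$ is already implicit in the very definition of $T_{m,n,p}$ and $V_{m,n,p}$ (otherwise the index set $\{S\subset\{1,\ldots,p\}:\vert S\vert=m\}$ is empty), so this is a non-issue; you can simply drop the justification and state that $m\le p$ is part of the standing setup.
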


Similar to the Gaussian case, Corollary \ref{Coro2} implies that we can also obtain a result that is consistent with the SRC.

\subsection{Wigner matrix case}

Notice that $w_{ij}=\sum_{k=1}^nx_{ki}x_{kj}$. Since $x_{ij}$'s are i.i.d random variables, $w_{ij}$ is the sum of the i.i.d. random variables. When $x_{ij}$'s are independent standard normal variables, we have $$\E\{w_{ij}\}=\begin{dcases}0,& \mathrm{if}\ i=j,\\ n,&\mathrm{if}\ i\neq j,\end{dcases}$$ and $\Var(w_{ij})=n$ for any $1\leq i,j\leq p$. By the standard CLT, for given $1\leq i,j\leq p$, we have the following results:
$$\dfrac{w_{ij}-n}{\sqrt{n}}\stackrel{d}{\longrightarrow}N(0,2)\ \text{if}\ i=j,\ \text{and}\ \dfrac{w_{ij}}{\sqrt{n}}\stackrel{d}{\longrightarrow}N(0,1)\ \text{if}\ i\neq j,$$ as $n\rightarrow\infty$. These limiting results motivate us to consider the case of the Wigner matrix. Let $\tilde{W}=(\tilde{w}_{ij})_{1\leq i,j\leq p}$ be a Wigner matrix, which is a symmetric matrix whose all elements follow the normal distribution. Specifically, for $\eta>0$, we assume
\begin{equation}\label{Wigner}
	\tilde{w}_{ij}\sim\begin{dcases} N(0,\eta) & \text{if}\ i=j; \\ N(0,1) & \text{if}\ i<j. \end{dcases}
\end{equation}

For $S\subset\{1,2,\cdots,p\}$, we denote $\tilde{W}_S=(\tilde{w}_{ij})_{i,j\in S}$. In this subsection, we will investigate the following two statistics:
\begin{equation}\label{tW1}
	\tilde{T}_{m,p}=\max_{S\subset\{1,\cdots,p\},\vert S\vert=m}\lambda_1(\tilde{W}_S)
\end{equation}
and
\begin{equation}\label{tWm}
	\tilde{V}_{m,p}=\min_{S\subset\{1,\cdots,p\},\vert S\vert=m}\lambda_m(\tilde{W}_S).
\end{equation}

When $0<\eta\leq2$, \cite{Cai:2021} gave the asymptotic behavior of $\tilde{T}_{m,p}$ and $\tilde{V}_{m,p}$ under a complicated condition. However, the result of the case when $\eta>2$ was not given. Here, we want to study the case of $\eta>2$. Hence, for statistics $\tilde{T}_{m,p}$ and $\tilde{V}_{m,p}$, the following laws of large numbers are obtained.

\begin{theorem}\label{T3}
Suppose $p\rightarrow\infty$. In addition, assume the entries of $\tilde{W}$ have the distribution as in \eqref{Wigner}. Then,
\begin{enumerate}
	\item[(i)] when $0<\eta\leq2$,
	\begin{equation}
		\lim_{p\to\infty} \bbP\left(-1\leq\frac{\tilde{V}_{m,p}}{\sqrt{[4(m-1)+2\eta]\log p}}\leq\frac{\tilde{T}_{m,p}}{\sqrt{[4(m-1)+2\eta]\log p}}\leq 1\right)=1.
	\end{equation}
	
	\item[(ii)] when $\eta>2$,
	\begin{equation}
		\lim_{p\to\infty} \bbP\left(-1\leq\frac{\tilde{V}_{m,p}}{\sqrt{2\eta m\log p}}\leq\frac{\tilde{T}_{m,p}}{\sqrt{2\eta m\log p}}\leq 1\right)=1.
	\end{equation}

\end{enumerate}
\end{theorem}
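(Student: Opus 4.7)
The plan is to run the same $\varepsilon$-net argument that drove Theorems~\ref{T1} and~\ref{T2}, but now exploiting the exact Gaussianity of the entries of $\tilde W$. Fix $S\subset\{1,\ldots,p\}$ with $|S|=m$ and a unit vector $u\in\RR^m$. Because the entries of $\tilde W_S$ are independent Gaussians, the quadratic form
\[
u^\top \tilde W_S\, u \;=\; \sum_{i=1}^m \tilde w_{ii}\, u_i^2 \;+\; 2\sum_{i<j} \tilde w_{ij}\, u_i u_j
\]
is itself a centered Gaussian variable, and a direct computation gives
\[
\sigma^2(u) \;:=\; \Var(u^\top \tilde W_S u) \;=\; \eta\sum_{i=1}^m u_i^4 \;+\; 2\sum_{i\neq j} u_i^2 u_j^2 \;=\; 2 + (\eta-2)\sum_{i=1}^m u_i^4.
\]
Hence there is no need for the moment-generating-function machinery used in Theorem~\ref{T2}; one can use the sharp Gaussian tail $\bbP(u^\top \tilde W_S u>t)\le e^{-t^2/(2\sigma^2(u))}$ throughout.

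The coefficient $\sum u_i^4$ ranges over $[1/m,\,1]$ on the unit sphere $\mathbb{S}^{m-1}$, the lower extreme attained by the uniform vector and the upper extreme by any coordinate vector. Consequently
\[
\sigma_{\max}^2 \;:=\; \sup_{\|u\|=1}\sigma^2(u) \;=\; \begin{dcases} \dfrac{2m+\eta-2}{m}, & 0<\eta\le 2,\\[2pt] \eta, & \eta>2,\end{dcases}
\]
so that $2m\,\sigma_{\max}^2$ is precisely $4(m-1)+2\eta$ in case~(i) and $2\eta m$ in case~(ii). This is exactly the place where the two regimes of the theorem split: the maximizer of $\sigma^2(u)$ on $\mathbb{S}^{m-1}$ flips from the flat direction when $\eta\le 2$ to a coordinate direction when $\eta>2$.

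Next I fix $\varepsilon\in(0,1/4)$, choose an $\varepsilon$-net $\mathcal N\subset\mathbb{S}^{m-1}$ of cardinality at most $(3/\varepsilon)^m$, and apply the standard inequality $\lambda_1(\tilde W_S)\le (1-2\varepsilon)^{-1}\max_{u\in\mathcal N}u^\top \tilde W_S u$ (the same device underlying Lemma~\ref{matrixbound}). A union bound over $\mathcal N$ and over the $\binom{p}{m}\le p^m$ index sets $S$ gives, for every $\delta>0$,
\[
\bbP\!\left(\tilde T_{m,p}>(1+\delta)\sqrt{2m\sigma_{\max}^2\log p}\right) \;\le\; \Bigl(\tfrac{3p}{\varepsilon}\Bigr)^{\!m}\exp\!\Bigl(-(1-2\varepsilon)^2(1+\delta)^2\, m\log p\Bigr),
\]
which tends to $0$ once $\varepsilon,\delta$ are chosen small enough that $(1-2\varepsilon)(1+\delta)>1$. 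Since $\delta$ is arbitrary, this yields the upper bound in both (i) and~(ii). The matching lower bound on $\tilde V_{m,p}$ is automatic from the distributional symmetry $-\tilde W\stackrel{d}{=}\tilde W$ together with $\lambda_m(\tilde W_S)=-\lambda_1(-\tilde W_S)$, so that $\tilde V_{m,p}\stackrel{d}{=}-\tilde T_{m,p}$.

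The main technical nuisance I anticipate is bookkeeping in the discretization step so that the normalizing constant can be pushed down to exactly $1$ in the limit. Concretely, one must let $\varepsilon\downarrow 0$ slowly enough that $m\log(3/\varepsilon)=o(m\log p)$ while keeping $(1-2\varepsilon)^{-1}$ close to $1$; this is immediate when $m$ is held fixed, but if $m$ is permitted to grow with $p$ it requires a careful diagonal choice $\varepsilon=\varepsilon(p)$, analogous to the calibration already performed in the proofs of Theorems~\ref{T1} and~\ref{T2}.
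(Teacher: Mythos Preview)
Your proposal is correct and mirrors the paper's proof: both compute the exact Gaussian variance $\sigma^2(u)=2+(\eta-2)\sum_i u_i^4$ of the quadratic form, maximize it over the sphere to produce the split at $\eta=2$, and then combine the Gaussian tail bound with an $\varepsilon$-net (Lemma~\ref{matrixbound} in the paper) and a union bound over the $\binom{p}{m}$ subsets. The only cosmetic differences are that you dispatch $\tilde V_{m,p}$ via the symmetry $-\tilde W\stackrel{d}{=}\tilde W$ rather than repeating the upper-tail argument as the paper does, and that your net inequality should strictly read $\|\tilde W_S\|\le(1-2\varepsilon)^{-1}\max_{u\in\mathcal N}|u^\top\tilde W_S u|$ (with absolute value), which costs only a harmless factor of~$2$.
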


Notice that the results in Theorem \ref{T3} give the asymptotic upper bound and lower bound of the statistics $\tilde{T}_{m,p}$ and $\tilde{V}_{m,p}$. Compared with the result in \cite{Cai:2021}, we not only relax the condition, but also obtain the result when $\eta>2$.

\section{Technical Proofs}\label{Section4}

Before proving the main Theorems, we need the following two key lemmas.
\begin{lemma}\label{matrixbound}
For any $m\times m$ symmetric matrix $A$, there exist $v_1,\cdots,v_{(1+2/\varepsilon)^m}\in S^{m-1}$ such that the following inequality holds $$
\|A\|\leq \frac{1}{1-\sqrt{\varepsilon^2(4-\varepsilon^2)}}\sup_{j\leq(1+2/\varepsilon)^m} \vert v_j^\top Av_j\vert,$$ for $\varepsilon\in(0,\sqrt{2-\sqrt{3}})$, where $S^{m-1}\subset \mathbb{R}^m$ is unit sphere in the Euclidean distance.
\end{lemma}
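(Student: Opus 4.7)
The plan is to carry out a standard $\varepsilon$-net argument, but with one small refinement that yields the (slightly sharper) constant $\frac{1}{1-\sqrt{\varepsilon^2(4-\varepsilon^2)}}$ in place of the more familiar $\frac{1}{1-2\varepsilon}$. Throughout I will use that, since $A$ is symmetric, $\|A\|=\sup_{u\in S^{m-1}}|u^\top A u|$.

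First, I would invoke the volumetric covering bound: for any $\varepsilon\in(0,1)$ there exists an $\varepsilon$-net $\mathcal{N}=\{v_1,\dots,v_N\}\subset S^{m-1}$ of cardinality $N\leq(1+2/\varepsilon)^m$, i.e.\ for every $u\in S^{m-1}$ one can pick $v_j\in\mathcal{N}$ with $\|u-v_j\|\leq\varepsilon$. This is a textbook fact and I would simply cite it.

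The next step is the symmetric identity that gives the improved constant. Using $A=A^\top$, one has
\begin{equation*}
u^\top A u - v^\top A v = (u-v)^\top A (u+v),
\end{equation*}
which follows from the decomposition $uu^\top-vv^\top=\tfrac12\bigl[(u+v)(u-v)^\top+(u-v)(u+v)^\top\bigr]$ and taking the trace against $A$. Applying Cauchy--Schwarz together with the definition of spectral norm,
\begin{equation*}
|u^\top A u - v^\top A v| \leq \|u-v\|\,\|u+v\|\,\|A\|.
\end{equation*}
The refinement now comes from the parallelogram law: since $u,v\in S^{m-1}$,
\begin{equation*}
\|u+v\|^2+\|u-v\|^2 = 2(\|u\|^2+\|v\|^2)=4,
\end{equation*}
so writing $t=\|u-v\|^2\leq \varepsilon^2$ gives $\|u-v\|\,\|u+v\|=\sqrt{t(4-t)}$. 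The map $t\mapsto t(4-t)$ is increasing on $[0,2]$, and our restriction $\varepsilon<\sqrt{2-\sqrt{3}}<\sqrt{2}$ keeps us in that range, so the product is bounded by $\sqrt{\varepsilon^2(4-\varepsilon^2)}$.

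Finally, I would close the argument in the usual $\varepsilon$-net fashion: for any unit $u$, choosing the nearest $v_j$ yields
\begin{equation*}
|u^\top A u| \leq |v_j^\top A v_j| + \sqrt{\varepsilon^2(4-\varepsilon^2)}\,\|A\|
\leq \sup_{j\leq N}|v_j^\top A v_j| + \sqrt{\varepsilon^2(4-\varepsilon^2)}\,\|A\|.
\end{equation*}
Taking the supremum over $u\in S^{m-1}$ on the left and rearranging gives the claim, with the constraint $\varepsilon<\sqrt{2-\sqrt{3}}$ being exactly what is needed for $\sqrt{\varepsilon^2(4-\varepsilon^2)}<1$ (since $\varepsilon^2=2-\sqrt{3}$ is the smaller root of $x^2-4x+1=0$). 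There is no real obstacle here; the only non-routine piece is remembering to use the symmetric identity and the parallelogram law instead of the cruder triangle-inequality bound that yields the $1-2\varepsilon$ denominator.
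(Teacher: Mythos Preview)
Your proposal is correct and matches the paper's proof essentially step for step: both use the symmetric identity $u^\top Au-v^\top Av=(u-v)^\top A(u+v)$, the parallelogram-law bound $\|u-v\|\|u+v\|\le\sqrt{\varepsilon^2(4-\varepsilon^2)}$ via monotonicity on $[0,2]$, the standard $\varepsilon$-net rearrangement, and the volumetric bound $\#(\mathcal N)\le(1+2/\varepsilon)^m$. The only cosmetic differences are that you justify the symmetric identity through the rank-two decomposition of $uu^\top-vv^\top$ and explicitly identify $\varepsilon=\sqrt{2-\sqrt{3}}$ as the threshold where the denominator vanishes, whereas the paper writes the identity directly and leaves the threshold implicit.
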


\begin{proof}
For any $m\times m$ symmetric matrix $A$ and unit vectors $u,v$, we have
\begin{align*}
	\vert u^\top Au\vert-\vert v^\top Av\vert  & \leq \vert u^\top Au-v^\top Av\vert \\
	& = \vert (u-v)^\top A(u+v)\vert \\
	& \leq \|u-v\|\cdot\|A\|\cdot\|u+v\|.
\end{align*}

For $\varepsilon\in(0,\sqrt{2-\sqrt{3}})$, let $\|u-v\|=t\leq\varepsilon$. Note that $\|u-v\|^2 = 2-2u^\top v$, then we have $\|u+v\|^2 = 2+2u^\top v=4-t^2$. Hence, $\|u-v\|\|u+v\|=\sqrt{t^2(4-t^2)}$. Since the right of inequality is an increasing function about $t$ when $0<t<\sqrt{2}$, we have $\|u-v\|\|u+v\|\leq\sqrt{\varepsilon^2(4-\varepsilon^2)}$. Thus, $$\vert u^\top Au\vert-\vert v^\top Av\vert\leq\varepsilon\|A\|\sqrt{4-\varepsilon^2}.$$

Let $S^{m-1}_{\varepsilon}$ be an $\varepsilon$-net of the unit sphere $S^{m-1}\subset \mathbb{R}^m$ in the Euclidean distance. We have
$$\|A\| \leq \sup_{u\in S^{m-1}}\vert u^\top Au\vert \leq\sup_{v\in S^{m-1}_{\varepsilon}}\vert v^\top Av\vert+\|A\|\sqrt{\varepsilon^2(4-\varepsilon^2)},$$
which implies that, for $0<\varepsilon<\sqrt{2-\sqrt{3}}$,
$$\|A\|\leq \frac{1}{1-\sqrt{\varepsilon^2(4-\varepsilon^2)}}\sup_{v\in S^{m-1}_{\varepsilon}}\vert v^\top Av\vert.$$
Since we are allowed to pack $\#(S^{m-1}_{\varepsilon})$ balls of radius $\varepsilon/2$ into a $1+\varepsilon/2$ ball in $\mathbb{R}^m$, volume comparison yields
$$(\varepsilon/2)^m\#(S^{m-1}_{\varepsilon})\leq(1+\varepsilon/2)^m,$$
that is, $$\#(S^{m-1}_{\varepsilon})\leq(1+2/\varepsilon)^m.$$

There exist $v_1,\dots,v_{(1+2/\varepsilon)^m}\in S^{m-1}$ such that
$$\|A\|\leq \frac{1}{1-\sqrt{\varepsilon^2(4-\varepsilon^2)}}\sup_{j\leq(1+2/\varepsilon)^m}\vert  v_j^\top Av_j\vert,$$
for any $m\times m$ symmetric matrix $A$.
\end{proof}

\begin{lemma}
\label{lemma:chen}\citep{Chen:1990}
Suppose $\xi_1,\xi_2,\ldots,\xi_n$ are i.i.d random variables with $\E\xi_1=0$ and $\E\xi_1^2=1$. Set $S_n=\sum_{i=1}^n\xi_i$. Let $0<\alpha\leq 1$ and $\{a_n:n\geq 1\}$ satisfy that $a_n\rightarrow\infty$ and $a_n=o(n^{\alpha/(2(2-\alpha))})$. If $\E e^{t_0\vert \xi_1\vert^\alpha}<\infty$ for some $t_0>0$, then
$$\lim_n\frac{1}{a_n^2}\log \bbP(\frac{S_n}{\sqrt{n}a_n}\geq \mu)=-\frac{\mu^2}{2}$$
for any $\mu>0$.
\end{lemma}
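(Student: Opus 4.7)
The statement is a Cramér--Chernoff type moderate deviation principle with quadratic rate $I(\mu) = \mu^2/2$. The strategy is to prove matching upper and lower bounds for $a_n^{-2} \log \bbP(S_n/(\sqrt n a_n) \ge \mu)$. The upper bound is the exponential Chebyshev (Chernoff) argument applied to a suitably truncated sum, and the lower bound is Cram\'er's exponential change of measure. The sub-exponential hypothesis $\E e^{t_0|\xi_1|^\alpha} < \infty$ with $\alpha \in (0,1]$ forces truncation whenever $\alpha < 1$ because the raw moment generating function need not exist, and the precise constraint $a_n = o(n^{\alpha/(2(2-\alpha))})$ emerges from balancing the truncation loss against the cubic remainder in the cumulant expansion.

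\textbf{Upper bound.} Choose a truncation level $b_n \to \infty$ and put $\xi_i' = \xi_i \mathbf 1_{\{|\xi_i|\le b_n\}} - \E[\xi_i \mathbf 1_{\{|\xi_i|\le b_n\}}]$, $\xi_i'' = \xi_i - \xi_i'$. On the event that no $|\xi_i|$ exceeds $b_n$ we have $\sum \xi_i'' = 0$, and the complement has probability at most $n \bbP(|\xi_1| > b_n) \le n \E e^{t_0|\xi_1|^\alpha} e^{-t_0 b_n^\alpha}$; choosing $b_n$ so that $t_0 b_n^\alpha$ dominates $a_n^2$ at the logarithmic scale makes this contribution negligible. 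For the truncated sum apply Markov's inequality to $\exp(\lambda \sum \xi_i')$ with $\lambda = \mu a_n/\sqrt n$. Since $|\xi_i'| \le 2 b_n$ and $\Var(\xi_i') = 1 + o(1)$, a Taylor expansion of the cumulant generating function yields $\log \E e^{\lambda \xi_1'} = \tfrac12 \lambda^2 + O(\lambda^3 b_n)$, hence
\[
\log \bbP\!\Bigl(\sum_{i=1}^n \xi_i' \ge \sqrt n\, a_n\, \mu\Bigr) \le -\tfrac12 \mu^2 a_n^2 + O\!\left(\tfrac{a_n^3 b_n}{\sqrt n}\right),
\]
and the error is $o(a_n^2)$ under the stated growth condition, giving $\limsup_n a_n^{-2}\log \bbP(S_n/(\sqrt n a_n) \ge \mu) \le -\mu^2/2$.

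\textbf{Lower bound.} For the matching lower bound I would introduce the tilted law
\[
\frac{d\bbQ_n}{d\bbP} = \prod_{i=1}^n \frac{e^{\lambda \xi_i'}}{\E e^{\lambda \xi_i'}}, \qquad \lambda = \mu a_n/\sqrt n,
\]
under which the truncated increments are i.i.d.\ with mean $\lambda + o(\lambda)$ and variance $1 + o(1)$, so $\E_{\bbQ_n}\sum \xi_i' = \sqrt n\, a_n \mu\,(1+o(1))$. Writing
\[
\bbP\!\Bigl(\sum_i \xi_i' \ge \sqrt n\, a_n\, \mu\Bigr) = \bigl(\E e^{\lambda \xi_1'}\bigr)^n \E_{\bbQ_n}\!\Bigl[e^{-\lambda \sum_i \xi_i'} \mathbf 1_{\{\sum_i \xi_i' \ge \sqrt n\, a_n \mu\}}\Bigr]
\]
and restricting the event to the band $[\sqrt n\, a_n\mu,\; \sqrt n\, a_n\mu + \sqrt n]$, a Chebyshev estimate under $\bbQ_n$ bounds the $\bbQ_n$-probability of this band below by a positive constant, while on the band $e^{-\lambda \sum_i \xi_i'} \ge \exp(-\mu^2 a_n^2 + o(a_n^2))$. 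Combined with $(\E e^{\lambda \xi_1'})^n = \exp(\mu^2 a_n^2/2 + o(a_n^2))$ this delivers $\liminf_n a_n^{-2}\log \bbP \ge -\mu^2/2$.

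\textbf{Main obstacle.} The delicate point is the two-parameter tuning of the truncation level $b_n$ against the tilt/scale $\lambda = \mu a_n/\sqrt n$: $b_n$ must grow fast enough that $\bbP(\exists\, i : |\xi_i| > b_n)$ is negligible on the MDP scale, yet slowly enough that $\lambda b_n \to 0$ so the cubic cumulant remainder is absorbed into $o(a_n^2)$. The precise threshold $a_n = o(n^{\alpha/(2(2-\alpha))})$ is exactly where these two demands meet when one chooses $b_n$ of order $a_n^{2/\alpha}$. A secondary technical issue is verifying the Lindeberg/Chebyshev concentration of $\sum \xi_i'$ under the tilted measure uniformly in $n$, but since the $\xi_i'$ are bounded i.i.d.\ under $\bbQ_n$ with slowly varying mean and variance, this is standard.
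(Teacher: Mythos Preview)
The paper does not prove this lemma; it is quoted from \cite{Chen:1990} and used as a black box in the proofs of Theorems~\ref{T1}--\ref{T3}. So there is no proof in the paper to compare your proposal against.

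Your overall architecture---truncation plus Chernoff for the upper bound, exponential tilting for the lower bound---is indeed the standard route to moderate deviation principles under sub-Weibull moment hypotheses, and is in spirit how such results are established. However, your identification of where the threshold $a_n = o\bigl(n^{\alpha/(2(2-\alpha))}\bigr)$ comes from is quantitatively wrong. With $b_n \asymp a_n^{2/\alpha}$ and $\lambda = \mu a_n/\sqrt{n}$, the requirement $\lambda b_n \to 0$ (which you invoke to control the cubic cumulant remainder) forces $a_n^{1+2/\alpha} = o(\sqrt{n})$, i.e.\ $a_n = o\bigl(n^{\alpha/(2(2+\alpha))}\bigr)$---note the sign in the denominator. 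For $\alpha = 1$ your balancing would yield only $a_n = o(n^{1/6})$, far short of the correct $a_n = o(n^{1/2})$.

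The sharp exponent $\alpha/(2(2-\alpha))$ is not produced by the cubic-remainder balance at all; it arises from the competition between the Gaussian moderate-deviation rate $\exp(-\mu^2 a_n^2/2)$ and the ``one big jump'' probability $n\,\bbP\bigl(\xi_1 \gtrsim \sqrt{n}\,a_n\,\mu\bigr) \approx n\exp\bigl(-c(\sqrt{n}\,a_n)^\alpha\bigr)$. Equating exponents gives $a_n^2 \sim (\sqrt{n}\,a_n)^\alpha$, i.e.\ $a_n^{2-\alpha} \sim n^{\alpha/2}$, which is exactly the stated threshold. To reach it rigorously you cannot rely on $\lambda b_n \to 0$; you must exploit the Weibull tail inside the MGF estimate more delicately (controlling $\E e^{\lambda\xi_1'}$ through the actual decay $\bbP(|\xi_1|>x)\le Ce^{-t_0 x^\alpha}$ rather than the crude truncation bound $|\xi_1'|\le 2b_n$), or split the range of $\xi_1'$ into several regimes. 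Your lower-bound sketch via tilting is fine in outline, but the same refinement is needed there to cover the full range of $a_n$.
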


\subsection{Proof of Theorem \ref{T1}}

It is easy to know that $\vert\xi_t\vert=\vert\zeta_t^2-1\vert<\max\{\zeta_t^2,1\}$. Then we have
	\begin{align*}
		\E(\vert\xi_t\vert^k)&<\E(\max\{\zeta_t^{2k},1\})\\
        &\leq\E(\zeta_t^{2k})+1\\
        &=(2k-1)!!+1.
	\end{align*}
	
For $0<t_0<\dfrac{1}{2}$, it holds that

\begin{align*}
\E(e^{t_0\mid\xi_1\mid})&=\E(\sum_{k=0}^{\infty}\frac{t_0^k\mid\xi_1\mid^k}{k!})\\
&=\sum_{k=0}^{\infty}\E(\frac{t_0^k\mid\xi_1\mid^k}{k!})\\
&\leq\sum_{k=0}^{\infty}(2t_0)^k\frac{(k-\frac{1}{2})(k-\frac{3}{2})\cdots\frac{1}{2}+2^{-k}}{k!}\\
&\leq\sum_{k=0}^{\infty}(2t_0)^k\\
&=\frac{1}{1-2t_0}.
\end{align*}

Recall that $u^\top A_Su=\frac{1}{\sqrt{n}}\sum_{t=1}^n\xi_t$, where $\xi_1,\cdots,\xi_n$ are i.i.d. random variables with $\E(\xi_t)=0$ and $\E(\xi_t^2)=2$. By Lemmas \ref{matrixbound} and \ref{lemma:chen}, when $x=o(\sqrt{n})$, we have
\begin{align*}
	P(\|A_S\|\geq x)&\leq P\left(\dfrac{1}{1-\sqrt{\varepsilon^2(4-\varepsilon^2)}}\sup_{j\leq(1+2/\varepsilon)^m}\vert v_j^\top A_S v_j\vert \geq x\right)\\
&=P\left(\sup_{j\leq(1+2/\varepsilon)^m}\mid v_j^\top Av_j\mid\geq \left[1-\sqrt{\varepsilon^2(4-\varepsilon^2)}\right]x\right)\\
&\leq 2\exp\left\{m\log(1+2/\varepsilon)-\frac{[1-\sqrt{\varepsilon^2(4-\varepsilon^2)}]^2x^2}{4}\right\},
\end{align*}
for $\varepsilon\in(0,\sqrt{2-\sqrt{3}})$ and sufficiently large $n$.

We first prove
\begin{equation}\label{eq:proof1}
	\lim_{n\to\infty} \bbP\left(\frac{\lambda_{\max}(m)}{2\sqrt{m\log p}}\geq 1+\delta\right)=0,
\end{equation}
for any $\delta>0$ small enough.

Let $z_n=2(1+\delta)\sqrt{m\log p}$. It is easy to see that $z_n=o(\sqrt{n})$. Hence, we have
\begin{align*}
&\ \bbP\left(\frac{\lambda_{\max}(m)}{2\sqrt{m\log p}}\geq 1+\delta\right)\\
=&\ \bbP\left(\frac{\max\limits_{S\subset\{1,\ldots,p\},\vert S\vert=m}\lambda_1(A_S)}{2\sqrt{m\log p}}\geq 1+\delta\right)\\
\leq&\ \sum_{S\subset\{1,\ldots,p\},\vert S\vert =m}\bbP\left(\frac{\lambda_1(A_S)}{2\sqrt{m\log p}}\geq 1+\delta\right)\\
\leq&\ \sum_{S\subset\{1,\ldots,p\},\vert S\vert =m}\bbP\left(\frac{\mid\mid A_S\mid\mid}{2\sqrt{m\log p}}\geq 1+\delta\right)\\
\leq&\ 2\exp\left\{m\log p+m\log(1+2/\varepsilon)-\frac{[1-\sqrt{\varepsilon^2(4-\varepsilon^2)}]^2z_n^2}{4}\right\}\\
=&\ 2\exp\left\{[1-(1+\delta)^2(1-\sqrt{\varepsilon^2(4-\varepsilon^2)})^2]m\log p+m\log(1+2/\varepsilon)\right\}
\end{align*}
for sufficiently large $n$. Note that it holds that $$1-(1+\delta)^2[1-\sqrt{\varepsilon^2(4-\varepsilon^2)}]^2<0$$ when $$\varepsilon^2<2-\sqrt{4-(1-\frac{1}{1+\delta})^2}.$$ Hence, we have $$\bbP\left(\frac{\lambda_{\max}(m)}{2\sqrt{m\log p}}\geq 1+\delta\right)=o(1).$$
	
Next, we also need to show that for any $\delta>0$,
\begin{equation}\label{eq:proof2}
	\lim_{n\to\infty} \bbP\left(\frac{\lambda_{\min}(m)}{2\sqrt{m\log p}}\leq -1-\delta\right)=0.
\end{equation}

Then, similarly to \eqref{eq:proof1}, we have
\begin{align*}
&\ \bbP\left(\frac{\lambda_{\min}(m)}{2\sqrt{m\log p}}\leq -1-\delta\right)\\
=&\ \bbP\left(\frac{-\lambda_{\min}(m)}{2\sqrt{m\log p}}\geq 1+\delta\right)\\
=&\ \bbP\left(\frac{\max\limits_{S\subset\{1,\ldots,p\},\vert S\vert =m}(-\lambda_m(A_S))}{2\sqrt{m\log p}}\geq 1+\delta\right)\\
\leq&\ \sum_{S\subset\{1,\ldots,p\},\vert S\vert =m}\bbP\left(\frac{-\lambda_m(A_S)}{2\sqrt{m\log p}}\geq 1+\delta\right)\\
\leq&\ \sum_{S\subset\{1,\ldots,p\},\vert S\vert =m}\bbP\left(\frac{\mid\mid A_S\mid\mid}{2\sqrt{m\log p}}\geq 1+\delta\right)\\
\leq&\ 2\exp\left\{m\log p+m\log(1+2/\varepsilon)-\frac{[1-\sqrt{\varepsilon^2(4-\varepsilon^2)}]^2z_n^2}{4}\right\}\\
=&\ 2\exp\left\{[1-(1+\delta)^2(1-\sqrt{\varepsilon^2(4-\varepsilon^2)})^2]m\log p+m\log(1+2/\varepsilon)\right\}
\end{align*}
for sufficiently large $n$. Similarly, when $\varepsilon^2<2-\sqrt{4-(1-\frac{1}{1+\delta})^2}$, we have $$\bbP\left(\frac{\lambda_{\min}(m)}{2\sqrt{m\log p}}\leq -1-\delta\right)=o(1).$$

Combining \eqref{eq:proof1} and \eqref{eq:proof2}, we have
$$\lim_{n\to\infty} \bbP\left(-1\leq\frac{\lambda_{\min}(m)}{2\sqrt{m\log p}}\leq\frac{\lambda_{\max}(m)}{2\sqrt{m\log p}}\leq 1\right)=1,$$
and
$$\lim_{n\to\infty} \bbP\left(1-2\sqrt{\frac{m\log p}{n}}\leq\frac{V_{m,n,p}}{n}\leq\frac{T_{m,n,p}}{n}\leq 1+2\sqrt{\frac{m\log p}{n}}\right)=1,$$
which imply that	
$$\dfrac{T_{m,n,p}}{n}=1+O_p\left(2\sqrt{\frac{m\log p}{n}}\right)$$
and
$$\dfrac{V_{m,n,p}}{n}=1+O_p\left(2\sqrt{\frac{m\log p}{n}}\right).$$\hfill $\square$

\subsection{Proof of Theorem \ref{T2}}

According to Assumption \ref{A2}, we have $\E(x_{11})=0, \E(x_{11}^2)=1$ and $\Var(x_{11}^2)=\eta$. Thus, we have $\E(\xi_t)=0$ and
\begin{align*}
	\E(\xi_t^2)&=\sum_{i=1}^m\E(x_{ti}^2-1)^2u_i^4+4\sum_{i<j}u_i^2u_j^2\\
&=\eta\sum_{i=1}^mu_i^4+4\sum_{i<j}u_i^2u_j^2\\
&=(\eta-2)\sum_{i=1}^mu_i^4+2(\sum_{i=1}^ku_i^2)^2\\
&=(\eta-2)\sum_{i=1}^mu_i^4+2.
\end{align*}

Note that $(\eta-2)\sum_{i=1}^mu_i^4\leq0$ when $0<\eta\leq 2$, and $(\eta-2)\sum_{i=1}^mu_i^4>0$ when $\eta>2$. At the same time, we observe that $\inf_{\|u\|=1}\sum_{i=1}^mu_i^4=\frac{1}{m}$ and $\sup_{\|u\|=1}\sum_{i=1}^mu_i^4=1$.

By Lemma \ref{lemma:chen}, when $x=o(\sqrt{n})$, we have,
\begin{align}\label{eq:proof3}
	\bbP(\| A_S\|\geq x)&\leq \bbP\left(\frac{1}{1-\sqrt{\varepsilon^2(4-\varepsilon^2)}}\sup_{j\leq(1+2/\varepsilon)^m}\vert v_j^\top A_S v_j\vert\geq x\right) \notag \\
	&=\bbP\left(\sup_{j\leq(1+2/\varepsilon)^m} \vert v_j^\top A_S v_j\vert\geq [1-\sqrt{\varepsilon^2(4-\varepsilon^2)}]x\right) \notag \\
    &\leq2(1+2/\varepsilon)^m\sup_{\|v_j\|=1}\bbP\left(v_j^\top A_S v_j\geq [1-\sqrt{\varepsilon^2(4-\varepsilon^2)}]x\right) \notag \\
    &\leq\begin{dcases}
	2\exp\left\{m\log(1+2/\varepsilon)-\frac{[1-\sqrt{\varepsilon^2(4-\varepsilon^2)}]^2x^2}{2[m^{-1}(\eta-2)+2]}\right\}, & 0< \eta\leq2, \\
   2\exp\left\{m\log(1+2/\varepsilon)-\frac{[1-\sqrt{\varepsilon^2(4-\varepsilon^2)}]^2x^2}{2\eta}\right\}, & \eta>2
    \end{dcases}
\end{align}
for sufficiently large $n$.

Considering $0<\eta\leq2$. We first prove
$$\lim_{n\to\infty} \bbP\left(\frac{\lambda_{\max}(m)}{\sqrt{[4(m-1)+2\eta]\log p}}\geq 1+\delta\right)=0.$$
for any $\delta>0$ small enough.

Let $y_n=(1+\delta)\sqrt{[4(m-1)+2\eta]\log p}$. It is easy to see that $y_n=o(\sqrt{n})$. By \eqref{eq:proof3}, we have
\begin{align*}
	&\ \bbP\left(\frac{\lambda_{\max}(m)}{\sqrt{[4(m-1)+2\eta]\log p}}\geq 1+\delta\right)\\
	\leq &\  2\exp\left\{m\log p+m\log(1+2/\varepsilon)-\frac{[1-\sqrt{\varepsilon^2(4-\varepsilon^2)}]^2y_n^2}{2[m^{-1}(\eta-2)+2]}\right\} \\
	=&\ 2\exp\left\{[1-(1+\delta)^2(1-\sqrt{\varepsilon^2(4-\varepsilon^2)})^2]m\log p+m\log(1+2/\varepsilon)\right\}
\end{align*}
for sufficiently large $n$. Similar to the proof of Theorem \ref{T1}, when $\varepsilon^2<2-\sqrt{4-(1-\frac{1}{1+\delta})^2}$, we have $$\bbP\left(\frac{\lambda_{\max}(m)}{\sqrt{[4(m-1)+2\eta]\log p}}\geq 1+\delta\right)=o(1).$$

Next, we also need to show that for any $\delta>0$,
$$\lim_{n\to\infty} \bbP\left(\frac{\lambda_{\min}(m)}{\sqrt{[4(m-1)+2\eta]\log p}}\leq -1-\delta\right)=0.$$

Similarly, we have

\begin{align*}
	&\ \bbP\left(\frac{\lambda_{\min}(m)}{\sqrt{[4(m-1)+2\eta]\log p}}\leq -1-\delta\right) \\
	= &\  \bbP\left(\frac{-\lambda_{\min}(m)}{\sqrt{[4(m-1)+2\eta]\log p}}\geq 1+\delta\right) \\
	\leq &\ 2\exp\left\{m\log p+m\log(1+2/\varepsilon)-\frac{[1-\sqrt{\varepsilon^2(4-\varepsilon^2)}]^2y_n^2}{2[m^{-1}(\eta-2)+2]}\right\} \\
	= &\ 2\exp\left\{[1-(1+\delta)^2(1-\sqrt{\varepsilon^2(4-\varepsilon^2)})^2]m\log p+m\log(1+2/\varepsilon)\right\}
\end{align*}
for sufficiently large $n$. When $\varepsilon^2<2-\sqrt{4-(1-\frac{1}{1+\delta})^2}$, we have $$\bbP\left(\frac{\lambda_{\min}(m)}{\sqrt{[4(m-1)+2\eta]\log p}}\leq -1-\delta\right)=o(1).$$

Hence, we have
$$\lim_{n\to\infty} \bbP\left(-1\leq\frac{\lambda_{\min}(m)}{\sqrt{[4(m-1)+2\eta]\log p}}\leq\frac{\lambda_{\max}(m)}{\sqrt{[4(m-1)+2\eta]\log p}}\leq 1\right)=1,$$
and
$$\lim_{n\to\infty} \bbP\left(1-\sqrt{\frac{[4(m-1)+2\eta]\log p}{n}}\leq\frac{V_{m,n,p}}{n}\leq\frac{T_{m,n,p}}{n}\leq 1+\sqrt{\frac{[4(m-1)+2\eta]\log p}{n}}\right)=1,$$
which imply that	
$$\dfrac{T_{m,n,p}}{n}=1+O_p\left(\sqrt{\frac{[4(m-1)+2\eta]\log p}{n}}\right)$$
and
$$\dfrac{V_{m,n,p}}{n}=1+O_p\left(\sqrt{\frac{[4(m-1)+2\eta]\log p}{n}}\right)$$
when $0<\eta\leq2.$

For $\eta>2$. We first prove
$$\lim_{n\to\infty} \bbP\left(\frac{\lambda_{\max}(m)}{\sqrt{2\eta m\log p}}\geq 1+\delta\right)=0.$$
for any $\delta>0$ small enough.

Let $z_n=(1+\delta)\sqrt{2\eta m\log p}$. It is easy to see that $z_n=o(\sqrt{n})$. By \eqref{eq:proof3}, we have
\begin{align*}
	&\ \bbP\left(\frac{\lambda_{\max}(m)}{\sqrt{2\eta m\log p}}\geq 1+\delta\right)\\
	\leq &\  2\exp\left\{m\log p+m\log(1+2/\varepsilon)-\frac{[1-\sqrt{\varepsilon^2(4-\varepsilon^2)}]^2z_n^2}{2\eta}\right\} \\
	=&\ 2\exp\left\{[1-(1+\delta)^2(1-\sqrt{\varepsilon^2(4-\varepsilon^2)})^2]m\log p+m\log(1+2/\varepsilon)\right\}
\end{align*}
for sufficiently large $n$. When $\varepsilon^2<2-\sqrt{4-(1-\frac{1}{1+\delta})^2}$, we have $$\bbP\left(\frac{\lambda_{\max}(m)}{\sqrt{2\eta m\log p}}\geq 1+\delta\right)=o(1).$$

Finally, we show that for any $\delta>0$,
$$\lim_{n\to\infty} P\left(\frac{\lambda_{\min}(m)}{\sqrt{2\eta m\log p}}\leq -1-\delta\right)=0.$$

By \eqref{eq:proof3}, we have

\begin{align*}
	&\ \bbP\left(\frac{\lambda_{\min}(m)}{\sqrt{2\eta m\log p}}\leq -1-\delta\right) \\
	=&\ \bbP\left(\frac{-\lambda_{\min}(m)}{\sqrt{2\eta m\log p}}\geq 1+\delta\right) \\
	\leq&\ 2\exp\left\{m\log p+m\log(1+2/\varepsilon)-\frac{[1-\sqrt{\varepsilon^2(4-\varepsilon^2)}]^2z_n^2}{2\eta}\right\} \\
	=&\ 2\exp\left\{[1-(1+\delta)^2(1-\sqrt{\varepsilon^2(4-\varepsilon^2)})^2]m\log p+m\log(1+2/\varepsilon)\right\}
\end{align*}
for sufficiently large $n$. When $\varepsilon^2<2-\sqrt{4-(1-\frac{1}{1+\delta})^2}$, we have $$\bbP\left(\frac{\lambda_{\min}(m)}{\sqrt{2\eta m\log p}}\leq -1-\delta\right)=o(1).$$

Hence, we have
$$\lim_{n\to\infty} P\left(-1\leq\frac{\lambda_{\min}(m)}{\sqrt{2\eta m\log p}}\leq\frac{\lambda_{\max}(m)}{\sqrt{2\eta m\log p}}\leq 1\right)=1,$$
and
$$\lim_{n\to\infty} P\left(1-\sqrt{\frac{2\eta m\log p}{n}}\leq\frac{V_{m,n,p}}{n}\leq\frac{T_{m,n,p}}{n}\leq 1+\sqrt{\frac{2\eta m\log p}{n}}\right)=1,$$
which imply that	
$$\dfrac{T_{m,n,p}}{n}=1+O_p\left(\sqrt{\frac{2\eta m\log p}{n}}\right)$$
and
$$\dfrac{V_{m,n,p}}{n}=1+O_p\left(\sqrt{\frac{2\eta m\log p}{n}}\right).$$\hfill $\square$

\subsection{Proof of Theorem \ref{T3}}

For any unit vector $u$, we have
$$u^\top\tilde{W}_Su=\sum_{i=1}^m\tilde{w}_{ii}u_i^2+2\sum_{i<j}\tilde{w}_{ij}u_iu_j\sim N(0,(\eta-2)\sum_{i=1}^mu_i^4+2).$$

Note that $\bbP(N(0,1)\geq x)\leq e^{-x^2/2}$ for all $x\geq 1$. Thus, for $[1-\sqrt{\varepsilon^2(4-\varepsilon^2)}]x\geq 1$,
$$\sup_{\|u\|=1}\bbP\left(u^\top\tilde{W}_Su\geq [1-\sqrt{\varepsilon^2(4-\varepsilon^2)}]x\right)\leq \exp\left\{-\frac{[1-\sqrt{\varepsilon^2(4-\varepsilon^2)}]^2x^2}{2[(\eta-2)\sum_{i=1}^mu_i^4+2]}\right\}.$$

For $0<\eta\leq2$, observe that $(\eta-2)\sum_{i=1}^mu_i^4\leq0$ and $\inf_{\|u\|=1}\sum_{i=1}^mu_i^4=\frac{1}{m}$. Hence,
$$\sup_{\|u\|=1}\bbP\left(u^\top\tilde{W}_Su\geq [1-\sqrt{\varepsilon^2(4-\varepsilon^2)}]x\right)\leq \exp\left\{-\frac{[1-\sqrt{\varepsilon^2(4-\varepsilon^2)}]^2x^2}{2[m^{-1}(\eta-2)+2]}\right\}.$$

For $\eta>2$, observe that $(\eta-2)\sum_{i=1}^mu_i^4>0$ and $\max_{\|u\|=1}\sum_{i=1}^mu_i^4=1$. Hence,
$$\sup_{\|u\|=1}\bbP\left(u^\top\tilde{W}_Su\geq [1-\sqrt{\varepsilon^2(4-\varepsilon^2)}]x\right)\leq \exp\left\{-\frac{[1-\sqrt{\varepsilon^2(4-\varepsilon^2)}]^2x^2}{2\eta}\right\}.$$

Hence, for $\varepsilon\in(0,\sqrt{2-\sqrt{3}})$ and $v_1,\cdots,v_{(1+2/\varepsilon)^m}\in S^{m-1}$, we have
\begin{align*}
\bbP(\|\tilde{W}_S\|\geq x)&\leq P\left(\frac{1}{1-\sqrt{\varepsilon^2(4-\varepsilon^2)}}\sup_{j\leq(1+2/\varepsilon)^m}\vert v_j^\top\tilde{W}_Sv_j\vert\geq x\right)\\
&=\bbP\left(\sup_{j\leq(1+2/\varepsilon)^m}\vert v_j^\top\tilde{W}_Sv_j\vert\geq [1-\sqrt{\varepsilon^2(4-\varepsilon^2)}]x\right)\\
&\leq2(1+2/\varepsilon)^m\sup_{\|v_j\|=1}P\left(v_j^T\tilde{W}_Sv_j\geq [1-\sqrt{\varepsilon^2(4-\varepsilon^2)}]x\right)\\
&\leq2(1+2/\varepsilon)^m\sup_{\|v_j\|=1}\exp\left\{-\frac{[1-\sqrt{\varepsilon^2(4-\varepsilon^2)}]^2x^2}{2[(\eta-2)\sum_{i=1}^mv_{ji}^4+2]}\right\}\\
&=\begin{dcases}
	2(1+2/\varepsilon)^m\exp\left\{-\frac{[1-\sqrt{\varepsilon^2(4-\varepsilon^2)}]^2x^2}{2[m^{-1}(\eta-2)+2]}\right\}, & 0< \eta\leq2, \\
   2(1+2/\varepsilon)^m\exp\left\{-\frac{[1-\sqrt{\varepsilon^2(4-\varepsilon^2)}]^2x^2}{2\eta}\right\}, & \eta>2.
\end{dcases}\\
&=\begin{dcases}
	2\exp\left\{m\log(1+2/\varepsilon)-\frac{[1-\sqrt{\varepsilon^2(4-\varepsilon^2)}]^2x^2}{2[m^{-1}(\eta-2)+2]}\right\}, & 0< \eta\leq2, \\
   2\exp\left\{m\log(1+2/\varepsilon)-\frac{[1-\sqrt{\varepsilon^2(4-\varepsilon^2)}]^2x^2}{2\eta}\right\}, & \eta>2.
\end{dcases}
\end{align*}

The remaining proof is similar to that of Theorem \ref{T2}, and the  details are thus omitted.\hfill $\square$

\backmatter

\bmhead{Supplementary information} The proofs of Corollary \ref{Coro1}, and Corollary \ref{Coro2} are given in the supplementary material.

\bmhead{Acknowledgements} We are very grateful to two anonymous referees, an associate editor, and the editor for their valuable comments that have greatly improved the article.

\section*{Declarations}
\bmhead{Funding} Hu is partially supported by the National Natural Science Foundation of China (nos. 12171187, 11871237).

\bmhead{Conflicts of Interest} The authors have no relevant financial or non-financial interests to disclose.

\bibliography{ref}

\newpage
\titlelabel{S\thetitle.\quad}

\begin{center}
{\Large Supplementary Materials} \\
\vspace{0.5cm}
{\large Jianwei Hu, Seydou Keita, Kang Fu}
\end{center}

In the supplementary materials, we give the proofs of Corollaries 1 and 2. First, we introduce two key quantities. Similar to $\lambda_{\max}(m)$ and $\lambda_{\min}(m)$, we define
\[
\check{\lambda}_{\max}(m)=\max\limits_{S\subset\{1,\cdots,p\},|S|\leq m}\lambda_1(A_S),
\]
and
\[
\check{\lambda}_{\min}(m)=\min\limits_{S\subset\{1,\cdots,p\},|S|\leq m}\lambda_m(A_S),
\]
where $A_S=\dfrac{1}{\sqrt{n}}(X_S^\top X_S-nI)$. It is easy to see that
\[
\check{\lambda}_{\max}(m)=\max_{1\leq k\leq m}\lambda_{\max}(k),\ \text{and}\ \check{\lambda}_{\min}(m)=\min_{1\leq k\leq m}\lambda_{\min}(k).
\]
Further, we also have 
\[
\check{T}_{m,n,p}=n+\sqrt{n}\check{\lambda}_{\max}(m),
\]
and
\[
\check{V}_{m,n,p}=n+\sqrt{n}\check{\lambda}_{\min}(m).
\]

\section{Proof of Corollary 1}

Consider $\check{\lambda}_{\max}(m)$ and $\check{\lambda}_{\min}(m)$. We first prove
$$\lim_{n\to\infty} \bbP\left(\frac{\check{\lambda}_{\max}(m)}{2\sqrt{m\log p}}\geq 1+\delta\right)=0,$$
for any $\delta>0$ small enough.

Let $\check{z}_n=2(1+\delta)\sqrt{m\log p}$, we have $\check{z}_n=o(\sqrt{n})$ and
\begin{align*}
&\ \bbP\left(\frac{\check{\lambda}_{\max}(m)}{2\sqrt{m\log p}}\geq 1+\delta\right)\\
= & \ \bbP\left(\frac{\max\limits_{1\leq k\leq m}\lambda_{\max}(k)}{2\sqrt{m\log p}}\geq 1+\delta\right)\\
\leq &\  \sum_{k=1}^m\bbP\left(\frac{\lambda_{\max}(k)}{2\sqrt{m\log p}}\geq 1+\delta\right)\\
\leq &\  2\sum_{k=1}^m\exp\left\{k\log p+k\log(1+2/\epsilon)-\frac{[1-\sqrt{\epsilon^2(4-\epsilon^2)}]^2\check{z}_n^2}{4}\right\}\\
\leq &\  2\sum_{k=1}^m\exp\left\{[1-(1+\delta)^2(1-\sqrt{\epsilon^2(4-\epsilon^2)})^2]m\log p+m\log(1+2/\epsilon)\right\}\\
\leq &\  2\exp\left\{\log m+[1-(1+\delta)^2(1-\sqrt{\epsilon^2(4-\epsilon^2)})^2]m\log p+m\log(1+2/\epsilon)\right\}
\end{align*}
for sufficiently large $n$.

We also need to show that for any $\delta>0$,
$$\lim_{n\to\infty} \bbP\left(\frac{\check{\lambda}_{\min}(m)}{2\sqrt{m\log p}}\leq -1-\delta\right)=0.$$
We have 
\begin{align*}
&\ \bbP\left(\frac{\check{\lambda}_{\min}(m)}{\sqrt{2m\log p}}\leq -1-\delta\right)\\
= &\  \bbP\left(\frac{-\check{\lambda}_{\min}(m)}{2\sqrt{m\log p}}\geq 1+\delta\right)\\
= &\  \bbP\left(\frac{\max\limits_{1\leq k\leq m}(-\lambda_{\min}(k))}{2\sqrt{m\log p}}\geq 1+\delta\right)\\
\leq &\  \sum_{k=1}^m\bbP\left(\frac{-\lambda_{\min}(k)}{2\sqrt{m\log p}}\geq 1+\delta\right)\\
\leq &\  2\sum_{k=1}^m\exp\left\{k\log p+k\log(1+2/\epsilon)-\frac{[1-\sqrt{\epsilon^2(4-\epsilon^2)}]^2\check{z}_n^2}{4}\right\}\\
\leq &\  2\sum_{k=1}^m\exp\left\{[1-(1+\delta)^2(1-\sqrt{\epsilon^2(4-\epsilon^2)})^2]m\log p+m\log(1+2/\epsilon)\right\}\\
\leq &\  2\exp\left\{\log m+[1-(1+\delta)^2(1-\sqrt{\epsilon^2(4-\epsilon^2)})^2]m\log p+m\log(1+2/\epsilon)\right\}
\end{align*}
for sufficiently large $n$. Notice that it holds that $$1-(1+\delta)^2[1-\sqrt{\epsilon^2(4-\epsilon^2)}]^2<0$$ when $$\epsilon^2<2-\sqrt{4-(1-\frac{1}{1+\delta})^2}.$$ Then, we have $$\bbP\left(\frac{\check{\lambda}_{\max}(m)}{2\sqrt{m\log p}}\geq 1+\delta\right)=o(1)$$ and $$\bbP\left(\frac{\check{\lambda}_{\min}(m)}{2\sqrt{m\log p}}\leq -1-\delta\right)=o(1).$$

Hence, we have
$$\lim_{n\to\infty} \bbP\left(-1\leq\frac{\check{\lambda}_{\min}(m)}{2\sqrt{m\log p}}\leq\frac{\check{\lambda}_{\max}(m)}{2\sqrt{m\log p}}\leq 1\right)=1,$$
and
$$\lim_{n\to\infty} \bbP\left(1-2\sqrt{\frac{m\log p}{n}}\leq\frac{\check{V}_{m,n,p}}{n}\leq\frac{\check{T}_{m,n,p}}{n}\leq 1+2\sqrt{\frac{m\log p}{n}}\right)=1,$$
which imply that
$$\dfrac{\check{T}_{m,n,p}}{n}=1+O_p\left(2\sqrt{\frac{m\log p}{n}}\right)$$
and
$$\dfrac{\check{V}_{m,n,p}}{n}=1+O_p\left(2\sqrt{\frac{m\log p}{n}}\right).$$\hfill $\square$

\section{Proof of Corollary 2}

For $0<\eta\leq 2$, we consider $\check{\lambda}_{\max}(m)$ and $\check{\lambda}_{\min}(m)$. We first prove
$$\lim_{n\to\infty} \bbP\left(\frac{\check{\lambda}_{\max}(m)}{\sqrt{[4(m-1)+2\eta]\log p}}\geq 1+\delta\right)=0,$$
for any $\delta>0$ small enough.

Let $\check{y}_n=(1+\delta)\sqrt{[4(m-1)+2\eta]\log p}$, we have $\check{y}_n=o(\sqrt{n})$ and
\begin{align*}
&\ \bbP\left(\frac{\check{\lambda}_{\max}(m)}{\sqrt{[4(m-1)+2\eta]\log p}}\geq 1+\delta\right)\\
= &\ \bbP\left(\frac{\max\limits_{1\leq k\leq m}\lambda_{\max}(k)}{\sqrt{[4(m-1)+2\eta]\log p}}\geq 1+\delta\right)\\
\leq &\ \sum_{k=1}^m\bbP\left(\frac{\lambda_{\max}(k)}{\sqrt{[4(m-1)+2\eta]\log p}}\geq 1+\delta\right)\\
\leq &\ 2\sum_{k=1}^m\exp\left\{k\log p+k\log(1+2/\epsilon)-\frac{[1-\sqrt{\epsilon^2(4-\epsilon^2)}]^2\check{y}_n^2}{2[m^{-1}(\eta-2)+2]}\right\}\\
\leq &\ 2\sum_{k=1}^m\exp\left\{[1-(1+\delta)^2(1-\sqrt{\epsilon^2(4-\epsilon^2)})^2]m\log p+m\log(1+2/\epsilon)\right\}\\
\leq &\ 2\exp\left\{\log m+[1-(1+\delta)^2(1-\sqrt{\epsilon^2(4-\epsilon^2)})^2]m\log p+m\log(1+2/\epsilon)\right\}
\end{align*}
for sufficiently large $n$.

We also need to show that for any $\delta>0$,
$$\lim_{n\to\infty} \bbP\left(\frac{\check{\lambda}_{\min}(m)}{\sqrt{[4(m-1)+2\eta]\log p}}\leq -1-\delta\right)=0.$$
We have 
\begin{align*}
 &\ \bbP\left(\frac{\check{\lambda}_{\min}(m)}{\sqrt{[4(m-1)+2\eta]\log p}}\leq -1-\delta\right)\\
= &\ \bbP\left(\frac{-\check{\lambda}_{\min}(m)}{\sqrt{[4(m-1)+2\eta]\log p}}\geq 1+\delta\right)\\
= &\ \bbP\left(\frac{\max\limits_{1\leq k\leq m}(-\lambda_{\min}(k))}{\sqrt{[4(m-1)+2\eta]\log p}}\geq 1+\delta\right)\\
\leq &\ \sum_{k=1}^m\bbP\left(\frac{-\lambda_{\min}(k)}{\sqrt{[4(m-1)+2\eta]\log p}}\geq 1+\delta\right)\\
\leq &\ 2\sum_{k=1}^m\exp\left\{k\log p+k\log(1+2/\epsilon)-\frac{[1-\sqrt{\epsilon^2(4-\epsilon^2)}]^2\check{y}_n^2}{2[m^{-1}(\eta-2)+2]}\right\}\\
\leq &\ 2\sum_{k=1}^m\exp\left\{[1-(1+\delta)^2(1-\sqrt{\epsilon^2(4-\epsilon^2)})^2]m\log p+m\log(1+2/\epsilon)\right\}\\
\leq &\ 2\exp\left\{\log m+[1-(1+\delta)^2(1-\sqrt{\epsilon^2(4-\epsilon^2)})^2]m\log p+m\log(1+2/\epsilon)\right\}
\end{align*}
for sufficiently large $n$. Then, when $$\epsilon^2<2-\sqrt{4-(1-\frac{1}{1+\delta})^2},$$ we have $$\bbP\left(\frac{\check{\lambda}_{\max}(m)}{\sqrt{[4(m-1)+2\eta]\log p}}\geq 1+\delta\right)=o(1)$$ and $$\bbP\left(\frac{\check{\lambda}_{\min}(m)}{\sqrt{[4(m-1)+2\eta]\log p}}\leq -1-\delta\right)=o(1).$$

Hence, we have
$$\lim_{n\to\infty} \bbP\left(-1\leq\frac{\check{\lambda}_{\min}(m)}{\sqrt{[4(m-1)+2\eta]\log p}}\leq\frac{\check{\lambda}_{\max}(m)}{\sqrt{[4(m-1)+2\eta]\log p}}\leq 1\right)=1,$$
and
$$\lim_{n\to\infty} \bbP\left(1-\sqrt{\frac{[4(m-1)+2\eta]\log p}{n}}\leq\frac{\check{V}_{m,n,p}}{n}\leq\frac{\check{T}_{m,n,p}}{n}\leq 1+\sqrt{\frac{[4(m-1)+2\eta]\log p}{n}}\right)=1,$$
which imply that	
$$\dfrac{\check{T}_{m,n,p}}{n}=1+O_p\left(\sqrt{\frac{[4(m-1)+2\eta]\log p}{n}}\right)$$
and
$$\dfrac{\check{V}_{m,n,p}}{n}=1+O_p\left(\sqrt{\frac{[4(m-1)+2\eta]\log p}{n}}\right).$$

Next, we consider $\check{\lambda}_{\max}(m)$ and $\check{\lambda}_{\min}(m)$ when $\eta>2$. We first prove
$$\lim_{n\to\infty} P\left(\frac{\check{\lambda}_{\max}(m)}{\sqrt{2\eta m\log p}}\geq 1+\delta\right)=0,$$
for any $\delta>0$ small enough.

Let $\check{z}_n=(1+\delta)\sqrt{2\eta m\log p}$, we have $\check{z}_n=o(\sqrt{n})$ and
\begin{align*}
&\ \bbP\left(\frac{\hat{\lambda}_{\max}(m)}{\sqrt{2\eta m\log p}}\geq 1+\delta\right)\\
= &\ \bbP\left(\frac{\max\limits_{1\leq k\leq m}\lambda_{\max}(k)}{\sqrt{2\eta m\log p}}\geq 1+\delta\right)\\
\leq &\  \sum_{k=1}^m\bbP\left(\frac{\lambda_{\max}(k)}{\sqrt{2\eta m\log p}}\geq 1+\delta\right)\\
\leq &\  2\sum_{k=1}^m\exp\left\{k\log p+k\log(1+2/\epsilon)-\frac{[1-\sqrt{\epsilon^2(4-\epsilon^2)}]^2\check{z}_n^2}{2\eta}\right\}\\
\leq &\  2\sum_{k=1}^m\exp\left\{[1-(1+\delta)^2(1-\sqrt{\epsilon^2(4-\epsilon^2)})^2]m\log p+m\log(1+2/\epsilon)\right\}\\
\leq &\  2\exp\left\{\log m+[1-(1+\delta)^2(1-\sqrt{\epsilon^2(4-\epsilon^2)})^2]m\log p+m\log(1+2/\epsilon)\right\}
\end{align*}
for sufficiently large $n$.

We also need to show that for any $\delta>0$,
$$\lim_{n\to\infty} \bbP\left(\frac{\check{\lambda}_{\min}(m)}{\sqrt{2\eta m\log p}}\leq -1-\delta\right)=0.$$
We have 
\begin{align*}
&\ \bbP\left(\frac{\check{\lambda}_{\min}(m)}{\sqrt{2\eta m\log p}}\leq -1-\delta\right)\\
= &\  \bbP\left(\frac{-\check{\lambda}_{\min}(m)}{\sqrt{2\eta m\log p}}\geq 1+\delta\right)\\
= &\  \bbP\left(\frac{\max\limits_{1\leq k\leq m}(-\lambda_{\min}(k))}{\sqrt{2\eta m\log p}}\geq 1+\delta\right)\\
\leq &\  \sum_{k=1}^m\bbP\left(\frac{-\lambda_{\min}(k)}{\sqrt{2\eta m\log p}}\geq 1+\delta\right)\\
\leq &\  2\sum_{k=1}^m\exp\left\{k\log p+k\log(1+2/\epsilon)-\frac{[1-\sqrt{\epsilon^2(4-\epsilon^2)}]^2\check{z}_n^2}{2\eta}\right\}\\
= &\  2\sum_{k=1}^m\exp\left\{[1-(1+\delta)^2(1-\sqrt{\epsilon^2(4-\epsilon^2)})^2]m\log p+m\log(1+2/\epsilon)\right\}\\
\leq &\  2\exp\left\{\log m+[1-(1+\delta)^2(1-\sqrt{\epsilon^2(4-\epsilon^2)})^2]m\log p+m\log(1+2/\epsilon)\right\}
\end{align*}
for sufficiently large $n$. Then, when $$\epsilon^2<2-\sqrt{4-(1-\frac{1}{1+\delta})^2},$$ we have $$\bbP\left(\frac{\hat{\lambda}_{\max}(m)}{\sqrt{2\eta m\log p}}\geq 1+\delta\right)=o(1)$$ and $$\bbP\left(\frac{\check{\lambda}_{\min}(m)}{\sqrt{2\eta m\log p}}\leq -1-\delta\right)=o(1).$$

Hence, we have
$$\lim_{n\to\infty} \bbP\left(-1\leq\frac{\check{\lambda}_{\min}(m)}{\sqrt{2\eta m\log p}}\leq\frac{\check{\lambda}_{\max}(m)}{\sqrt{2\eta m\log p}}\leq 1\right)=1,$$
and
$$\lim_{n\to\infty} \bbP\left(1-\sqrt{\frac{2\eta m\log p}{n}}\leq\frac{\check{V}_{m,n,p}}{n}\leq\frac{\check{T}_{m,n,p}}{n}\leq 1+\sqrt{\frac{2\eta m\log p}{n}}\right)=1,$$
which imply that	
$$\dfrac{\check{T}_{m,n,p}}{n}=1+O_p\left(\sqrt{\frac{2\eta m\log p}{n}}\right)$$
and
$$\dfrac{\check{V}_{m,n,p}}{n}=1+O_p\left(\sqrt{\frac{2\eta m\log p}{n}}\right).$$\hfill $\square$

\end{document}